\DeclareMathOperator{\dom}{dom}
\newcommand{\R}{{\mathbf R}}
\newcommand{\bp}{\begin{proof}[\textbf{Solution}]}
\newcommand{\ep}{\end{proof}}
\newcommand{\p}{\mathcal{P}}
\newcommand{\eqdef}{:=}
\newcommand{\kk}{^{(k)}}
\newcommand{\kpo}{^{(k+1)}}
\newcommand{\Sscr}{\mathcal{S}}
\newcommand{\Ascr}{\mathcal{A}}
\DeclareMathOperator*{\minim}{minimize}
\newcommand{\minimize}[1]{{\displaystyle\minim_{#1}}}
\DeclareMathOperator*{\subject}{subject\,\, to}
\newtheorem{theorem}{Theorem}
\newtheorem{lemma}[theorem]{Lemma}
\newtheorem{example}[theorem]{Example}
\newtheorem{remark}[theorem]{Remark}
\newtheorem{assumption}[theorem]{Assumption}
\title{A Flexible ADMM Algorithm for Big Data Applications}
\author{Daniel P. Robinson and Rachael E. H. Tappenden}
\date{}
\begin{document}
\maketitle\vspace{-5mm}
\begin{abstract}
We present a flexible Alternating Direction Method of Multipliers
(F-ADMM) algorithm for solving optimization problems involving a
strongly convex objective function that is separable into $n \geq 2$
blocks, subject to (non-separable) linear equality constraints. The
F-ADMM algorithm uses a \emph{Gauss-Seidel} scheme to update blocks of
variables, and a regularization term is added to each of the
subproblems arising within F-ADMM. We prove, under common assumptions,
that F-ADMM is globally convergent.

We also present a special case of F-ADMM that is \emph{partially
  parallelizable}, which makes it attractive in a big
data setting. In particular, we partition the data into groups, so
that each group consists of multiple blocks of variables. By applying
F-ADMM to this partitioning of the data, and using a specific
regularization matrix, we obtain a hybrid ADMM (H-ADMM) algorithm: the
grouped data is updated in a Gauss-Seidel fashion, and the blocks
within each group are updated in a Jacobi manner. Convergence of H-ADMM follows
directly from the convergence properties of F-ADMM. Also, a special case of
H-ADMM can be applied to functions that are convex, rather than
strongly convex. We present numerical experiments to demonstrate the
practical advantages of this algorithm.

  \bigskip \textbf{Keywords:} Alternating Direction Method of Multipliers; convex optimization; Gauss-Seidel; Jacobi; regularization; separable function;

  \medskip \textbf{AMS Classification:} 49M15; 49M37; 58C15; 65K05;
  65K10; 65Y20; 68Q25; 90C30; 90C60
\end{abstract}

\section{Introduction}
In this work we study the optimization problem
\begin{subequations}
\label{Problem}
\begin{eqnarray}
\label{Problem_objective}
     &\minimize{x_1,\dots, x_n}&  \sum_{i=1}^n f_i(x_i)\\
\label{Problem_constraints}
&\subject& \sum_{i=1}^n A_ix_i =b,
\end{eqnarray}
\end{subequations}
where, for each $i= 1,\dots,n$, the function $f_i: \R^{N_i}\to \R \cup \{\infty\}$ is strongly convex, closed, and extended real valued, and the vector $b\in \R^m$ and matrix $A_i \in \R^{m \times N_i} $ represent problem data.
Note that the objective function \eqref{Problem_objective} is
separable in the decision vectors $x_1,\dots,x_n$, but that the linear
constraint \eqref{Problem_constraints} links them together, which
makes problem \eqref{Problem} non-separable overall. 


We can think of the decision vectors $\{x_i\}$ as
``blocks" of a single decision vector $x \in \R^N$, where
$N=\sum_{i=1}^n N_i$. This can be achieved by partitioning the $N
\times N$ identity matrix $I$ column-wise into $n$ submatrices $\{U_i
\in \R^{N\times N_i}\}_{i=1}^n$, so that
$I=[U_1,\dots,U_n]$, and then setting $x = \sum_{i=1}^n U_i x_i$. That
is, $x$ is the vector formed by stacking the vectors $\{x_i\}_{i=1}^n$ on top of
each other. It is easy to see that $x_i =
U_i^T x \in \R^{N_i}$, and that if we let $A \eqdef \sum_{i=1}^n A_i
U_i^T \in \R^{m\times N},$ then \eqref{Problem_constraints} is
equivalent to $Ax = b$. Note that $A_i = A U_i$ for
$i=1,2,\dots,n$, and that we can write $A = [A_1,\dots,A_n]$. If we
now let $f(x) \eqdef \sum_{i=1}^n f_i (x_i)$, problem \eqref{Problem}
is equivalent to
\begin{subequations}
\label{Problem_compact}
\begin{eqnarray}
\label{Problem_compact_objective}
     &\minimize{x \in \R^N}&  f(x)\\
\label{Problem_compact_constraints}
&\subject&  Ax =b.
\end{eqnarray}
\end{subequations}
Although problems~\eqref{Problem} and~\eqref{Problem_compact} are mathematically equivalent, it is important to note that the best algorithms for solving them take advantage of the block structure that is made explicit in formulation~\eqref{Problem}.

\subsection{Relevant Previous Work}

Many popular algorithms for solving~\eqref{Problem} (equivalently, for solving~\eqref{Problem_compact}) are based on the Augmented Lagrangian function. In the remainder of this section, we describe several such algorithms that are closely related to our proposed framework.

\subsubsection*{The Augmented Lagrangian Method of Multipliers (ALMM)}


The ALMM (e.g., see~\cite{Bertsekas96}) is based on the augmented Lagrangian function
\begin{equation}\label{Lagrangian}
  \mathcal{L}_\rho(x; y) \eqdef f(x) - \langle y, Ax - b\rangle + \frac{\rho}{2}\|Ax - b\|_2^2,
\end{equation}
where $\rho>0$ is called the penalty parameter, $y\in \R^m$ is a dual
vector that estimates a Lagrange multiplier vector, and
$\langle p,q \rangle = p^T \!q$ is the standard inner product in $\R^n$. The most basic
variant of ALMM (see Algorithm~\ref{ALMM}), involves two key steps
during each iteration.  First, for a fixed dual estimate, the
augmented Lagrangian~\eqref{Lagrangian} is minimized with respect to
the primal vector $x$.  Second, using the minimizer computed in the first
step, a simple update is made to the dual vector that is equivalent to
a dual ascent step for maximizing an associated dual function.  In
practice, computing the minimizer in the first step is the
computational bottleneck.  This is especially true for large-scale
problems that arise in big data applications, and therefore extensive
research has focused on reducing its cost (e.g., decomposition
techniques \cite{Mulvey92,Ruszczynski95,Tappenden14}).

\begin{algorithm}[H]
\caption{A basic variant of ALMM for solving problem~\eqref{Problem_compact}.}\label{ALMM}
  \begin{algorithmic}[1]
    \State \textbf{Initialization:} $y^{(0)} \in \R^m$, iteration counter $k=0$, and penalty parameter $\rho > 0$.
    \While {the stopping condition has not been met}
    \State Update the primal variables by minimizing the augmented Lagrangian:
\begin{equation}
     \label{ALMM_minimize}
     \displaystyle x^{(k+1)} \leftarrow \arg \min_{x}\, \mathcal{L}_\rho(x;y^{(k)})
\end{equation}
\State Update the dual variables:
\begin{equation*}
     y^{(k+1)} \leftarrow y^{(k)} - \rho(Ax^{(k+1)}-b)
\end{equation*}
\State Set $k\gets k+1$.
\EndWhile
  \end{algorithmic}
\end{algorithm}

Although sophisticated variants of ALMM are successfully used in many important application areas (e.g., optimal control in natural gas networks~\cite{zavalastochastic}), generally they are unable to directly take advantage of the block separability described in formulation~\eqref{Problem}, when it exists.  Nonetheless, ALMM serves as the basis for many related and powerful methods, as we now discuss.

\subsubsection*{The Alternating Direction Method of Multipliers (ADMM)}\label{S_ADMM}


The ADMM has been a widely used algorithm for solving problems of the form \eqref{Problem} when $n =2$, for convex functions. Global convergence of ADMM was established in the early 1990's by Eckstein and Bertsekas~\cite{Eckstein92} while studying the algorithm as a particular instance of a Douglas-Rachford splitting method.  This relationship allowed them to use monotone operator theory to obtain their global convergence guarantees.
(An introduction to ADMM and its convergence theory can be found in the tutorial style paper by Eckstein \cite{Eckstein12}. See also \cite{Boyd10}.)
Pseudocode for ADMM when $n = 2$ is given below as Algorithm~\ref{ADMM}.
\begin{algorithm}[H]
\caption{ADMM for solving problem~\eqref{Problem} when $n = 2$.}\label{ADMM}
  \begin{algorithmic}[1]
    \State \textbf{Initialization:} $x^{(0)}\in\R^N$, $y^{(0)} \in \R^m$,
    iteration counter $k=0$, and penalty parameter $\rho > 0$.
    \While {the stopping condition has not been met}
    \State Update the primal variables in a Gauss-Seidel fashion:
\begin{subequations}
\begin{eqnarray}
     \label{ADMM_block1}
     \displaystyle x_1^{(k+1)} &\leftarrow& \arg \min_{x} \,\mathcal{L}_\rho(x,x_2^{(k)};y^{(k)})\\
     \label{ADMM_block2}
     \displaystyle x_2^{(k+1)} &\leftarrow& \arg \min_{x} \,\mathcal{L}_\rho(x_1^{(k+1)},x;y^{(k)})
\end{eqnarray}
\State Update the dual variables:
\begin{equation*}
     y^{(k+1)} \leftarrow y^{(k)} - \rho(Ax^{(k+1)}-b)
\end{equation*}
\end{subequations}
\State Set $k\gets k+1$.
\EndWhile
  \end{algorithmic}
\end{algorithm}

In words, ADMM works as follows. At iteration $k$, for a fixed
multiplier $y\kk$ and fixed block $x_2\kk$, 
 the new point $x_1\kpo$ is defined as the minimizer (for simplicity, we assume throughout that this minimizer exists and that it is unique) of the augmented Lagrangian with respect to the first block of variables $x_1$.  Then, in a similar fashion, the first (updated) block $x_1\kpo$ is fixed, and the augmented Lagrangian is minimized with respect to the second block of variables $x_2$ to obtain $x_2\kpo$. Finally, the dual variables are updated in the same manner as for the basic ALMM (see Algorithm~\ref{ALMM}),
 and the process is repeated. Notice that a key feature of ADMM is that the blocks of variables $x_1$ and $x_2$ are updated in a \emph{Gauss-Seidel} fashion, i.e., the \emph{updated} values for the first block of variables are used to define the subproblem used to obtain the updated values for the second block of variables. The motivation for the design of ADMM is that each subproblem (see~\eqref{ADMM_block1} and \eqref{ADMM_block2}) should be substantially easier to solve than the subproblem (see~\eqref{ALMM_minimize}) used by ALMM.  For many important applications, this is indeed the case.

The interest in ADMM has exploded in recent years because of applications in signal and image processing, compressed sensing \cite{Yang11}, matrix completion \cite{Yuan09}, distributed optimization and statistical and machine learning \cite{Boyd10}, and quadratic and linear programming~\cite{Boley13}. Convergence of ADMM has even been studied for specific instances of nonconvex functions, namely consensus and sharing problems \cite{Hong14}.

A natural question to ask is whether ADMM converges when there are more than two blocks, i.e., when $n \geq 3$.
The authors in~\cite{Chen13} show via a counterexample that ADMM is not necessarily convergent if $n=3$. However, they also show that if $n=3$ and at least two of the matrices that define the linking constraint \eqref{Problem_constraints} are orthogonal, then ADMM will converge. In a different paper~\cite{Caia14}, the authors show that ADMM will converge when $n = 3$ if at least one of the functions $f_i$ in \eqref{Problem_objective} is strongly convex.

Other works have considered the more general case of $n \geq 2$. For example, an ADMM-type algorithm for $n\geq2$ blocks is introduced in \cite{Wang14}, where during each iteration a randomly selected subset of blocks is updated in parallel. The method incorporates a ``backward step" on the dual update to ensure convergence. Hong and Luo~\cite{Hong12} present a convergence proof for the $n$ block ADMM when the functions are convex, but under many assumptions that are difficult to verify in practice. Work in \cite{Han12} shows that ADMM is convergent in the $n$ block case when the functions $f_i$ for $i = 1,\dots ,n$ are strongly convex.

\subsubsection*{The Generalized ADMM (G-ADMM)}

Deng and Yin~\cite{Deng12} introduced G-ADMM, which is a variant of ADMM for solving problems of the form \eqref{Problem} when $n=2$ and the functions $f_i$ are convex. They proposed the addition of a (general) regularization term to the augmented Lagrangian function during the minimization subproblem within ADMM and the addition of a relaxation parameter $\gamma$ to the dual variable update. Their motivation for the inclusion of a regularization term was twofold.  First, for certain applications, a careful choice of that regularizer lead to subproblems that were significantly easier to solve.  Second, the regularization stabilized the iterates, which has theoretical and numerical advantages.

Their method is stated below as Algorithm~\ref{GADMM}.  It uses, for
any symmetric positive-definite matrix $M$ and vector $z$, the
ellipsoidal norm
\begin{equation}
  \|z\|_M^2 \eqdef z^T M z.
\end{equation}

\begin{algorithm}[H]
\caption{G-ADMM for solving problem~\eqref{Problem} when $n=2$.}\label{GADMM}
  \begin{algorithmic}[1]
    \State \textbf{Initialization:} $x_1^{(0)}\in\R^{N_1}$, $x_2^{(0)}\in\R^{N_2}$, $y^{(0)} \in \R^m$, iteration counter $k=0$, parameters $\rho>0$ and $\gamma\in(0,2)$, and regularization matrices $P_1\in\R^{N_1\times N_1}$ and $P_2\in\R^{N_2\times N_2}$.
    \While {the stopping condition has not been met}
    \State Update the primal variables in a Gauss-Seidel fashion:
\begin{subequations} \label{GADMM_primal}
\begin{eqnarray}
     \label{GADMM_block1}
     \displaystyle x_1^{(k+1)} &\leftarrow& \arg \min_{x} \,\mathcal{L}_\rho(x,x_2^{(k)};y^{(k)}) + \tfrac12\|x - x_1^{(k)}\|_{P_1}^2\\
     \label{GADMM_block2}
     \displaystyle x_2^{(k+1)} &\leftarrow& \arg \min_{x} \,\mathcal{L}_\rho(x_1^{(k+1)},x;y^{(k)})  + \tfrac12\|x - x_2^{(k)}\|_{P_2}^2
\end{eqnarray}
\State Update the dual variables:
\begin{equation*}
     y^{(k+1)} \leftarrow y^{(k)} - \gamma\rho(Ax^{(k+1)}-b)
\end{equation*}
\end{subequations}
\State Set $k\gets k+1$.
\EndWhile
  \end{algorithmic}
\end{algorithm}

The authors prove~\cite{Deng12} that Algorithm \ref{GADMM} converges
to a solution from an arbitrary starting point as long as the
regularization matrices $P_1$ and $P_2$ in \eqref{GADMM_block1} and
\eqref{GADMM_block2}
satisfy certain properties. 
We stress that the convergence analysis for G-ADMM only applies to the $n = 2$ case.


\subsubsection*{The Jacobi ADMM (J-ADMM)}

Deng et al.~\cite{Deng14} have extended the ideas first presented in G-ADMM~\cite{Deng12}.  Their new J-ADMM strategy (stated below as Algorithm~\ref{JADMM}) may be used to solve problem~\eqref{Problem} in the general case of $n\geq 2$ blocks.
Note that~\eqref{JADMMupdate} is equivalent to the update
\begin{equation*}
  \displaystyle x_i^{(k+1)} \leftarrow \arg \min_{x_i} \,\mathcal{L}_\rho(x_1\kk,\dots,x_{i-1}\kk,x_i,x_{i+1}\kk,\dots,x_n\kk;y^{(k)})  + \tfrac12\|x_i - x_i^{(k)}\|_{P_i}^2,
\end{equation*}
(where $P_i\in \R^{N_i \times N_i}$ is a regularization matrix) which we state in order to highlight the relationship of their method to the previous ones. We also comment that the form of the update used in~\eqref{JADMMupdate} motivates why their algorithm is of the proximal type.

\begin{algorithm}[H]
\caption{J-ADMM for solving problem~\eqref{Problem} for $n \geq 2$.}\label{JADMM}
  \begin{algorithmic}[1]
    \State \textbf{Initialize:}  $x^{(0)} \in \R^N$, $y^{(0)} \in \R^m$, iteration counter $k=0$, parameters $\rho>0$ and $\gamma \in (0,2)$, and regularization matrices $P_i\in\R^{N_i \times N_i}$ for $i = 1,\dots,n$. \begin{subequations}
    \While {stopping condition has not been met}
    \For{$i = 1,\dots,n$ (in parallel)}
\begin{eqnarray}\label{JADMMupdate}
      \displaystyle x_i^{(k+1)} \leftarrow \arg \min_{x_i} \Big{\{}f_i(x_i) +  \frac{\rho}2\|A_ix_i + \sum_{j \neq i}^n A_jx_j^{(k)} - b - \frac{y\kk}{\rho}\|_2^2  + \frac{1}{2}\|x_i-x_i\kk\|_{P_i}^2\Big{\}}
     \label{Eq_Subproblem}
\end{eqnarray}
\EndFor
\State Update the dual variables:
\begin{equation}
     y^{(k+1)} \leftarrow y^{(k)} - \gamma\rho(Ax^{(k+1)}-b)
\end{equation}
\State Set $k \gets k+1$.
\EndWhile
\end{subequations}
  \end{algorithmic}
\end{algorithm}

In~\cite{Deng14}, the authors establish global convergence of J-ADMM for appropriately chosen regularization matrices $P_i$.  Moreover, they showed that J-ADMM has a convergence rate of o(1/k).


\subsection{Our Main Contributions}
We now summarize the main contributions of this work.
\begin{enumerate}
  \item We present a new flexible ADMM algorithm, called F-ADMM,
    that solves problems of the form \eqref{Problem} for strongly
    convex $f_i$, for general $n \geq 2$ based on a
    \emph{Gauss-Seidel} updating scheme. The quadratic regularizer
    used in F-ADMM is a user defined matrix that must be
    sufficiently positive definite (see Assumption \ref{Assume_Pi}),
    which makes F-ADMM flexible. For some applications, a
    careful choice of the regularizer makes the subproblems arising
    within F-ADMM significantly easier to solve, e.g., see the
    discussion in \cite[Section~1.2]{Deng12} and
    \cite[Section~1.2]{Deng14}. We prove that F-ADMM is \emph{globally
      convergent} in Section \ref{S_GADMM}. 
\item
We introduce a hybrid Jacobi/Gauss-Seidel variant of F-ADMM, called
H-ADMM, that is \emph{partially parallelizable}. This is significant
because it makes H-ADMM competitive in a \emph{big data setting}.
For H-ADMM, the blocks of variables are gathered into multiple groups,
with a Gauss-Seidel updating scheme between groups, and a Jacobi
updating scheme on the individual blocks within each group. We
demonstrate that H-ADMM is simply F-ADMM with a particular choice of
regularization matrix, and thus the convergence of H-ADMM follows directly from the convergence proof for F-ADMM.
\item
We show that if the $n$ blocks of data are partitioned into two
groups, then H-ADMM can be applied to convex functions $f_i$, rather
than strongly convex functions. In this special case, with carefully chosen regularization matrices, H-ADMM extends the algorithm in \cite{Deng12} from the $n=2$ case, to the case with general $n$, and convergence follows directly from the results presented in \cite{Deng12}.
\end{enumerate}

\subsection{Paper Outline}
In Section \ref{S_GADMM} we present our new flexible ADMM framework and show that any instance of it is globally convergent.  In Section \ref{S_HADMM} we consider a particular instance of our general framework, and proceed to show that it is a hybrid of Jacobi- and Gauss-Seidel-type updates.  We also discuss the practical advantages of this hybrid algorithm, which includes the fact that it is \emph{partially parallelizable}. Finally, in Section \ref{S_numericalresults} we present numerical experiments that illustrate the advantages of our flexible ADMM framework.

\section{A Flexible ADMM  (F-ADMM)}\label{S_GADMM}

In this section we present and analyze a new F-ADMM framework for solving problems of the form \eqref{Problem}.
%
%
For convenience, we define the vector
\begin{equation}\label{u}
  u\kk \eqdef \begin{bmatrix}
    x\kk\\
    y\kk
  \end{bmatrix}.
\end{equation}
Our analysis requires several assumptions concerning problem~\eqref{Problem} that are assumed to hold throughout.  The first of which uses $\partial f(x)$ to denote the subdifferential of $f$ at the point $x$, i.e.,
\begin{equation}
  \partial f(x) \eqdef \{s \in \R^N \;|\; \langle s, w-x\rangle \leq f(w) - f(x), \;\; \forall w \in {\rm dom} f\},
\end{equation}
where ${\rm dom} f = \{ x: f(x) < \infty\}$. Moreover,
\begin{equation}
  \partial f_i(x_i) \eqdef \{s_i \in \R^{N_i} \;|\; \langle s_i, w_i-x_i\rangle \leq f_i(w_i) - f_i(x_i), \;\; \forall w_i \in {\rm dom} f_i\}.
\end{equation}
We also require the following definition of strong convexity. A function $f_i: \R^N \to \R \cup \{+ \infty\}$ is strongly convex with convexity parameter $\mu_i  > 0$ if for all $x_i,w_i \in \dom f_i$,
\begin{equation}
\label{strongly_convex_1}
     f_i(w_i) \geq f_i(x_i) + \langle \partial f_i(x_i),w_i-x_i \rangle + \frac{\mu_i}{2}\|w_i-x_i\|_2^2.
\end{equation}
We may now state our assumptions on problem~\eqref{Problem}.

\begin{assumption}\label{Assume_saddlepoint}
  The set of saddle points (equivalently, the set of KKT-points) for~\eqref{Problem} is nonempty, i.e.,
  $$
  U^* := \{u^* \in\R^{N+m}: u^* = (x^*,y^*), \, A^Ty^* \in \partial f(x^*),
  \ \text{and} \
  Ax^* - b = 0\}
  \neq \emptyset.
  $$
\end{assumption}
\begin{assumption}\label{Assume_convex}
  The function $f_i$ is strongly convex with strong convexity constant $\mu_i>0$ for $i = 1,\dots,n$.
\end{assumption}

If Assumption \ref{Assume_saddlepoint} does not hold, then ADMM may
have unsolvable or unbounded subproblems, or the sequence of Lagrange
multiplier estimates may diverge. In particular, $x^*$ is the
solution to~\eqref{Problem} and $y^*$ is a solution to the associated dual
problem.  Assumption~\ref{Assume_convex} allows us to define
\begin{equation} \label{def-mu}
  \mu := \min_{1\leq i \leq n} \mu_i > 0
\end{equation}
as the minimum strong convexity parameter for the functions
$\{f_i\}_{i=1}^n$, as well as use the following lemma.
%
%
%
\begin{lemma}[Strong monotonicity of the subdifferential, Theorem~12.53 and Exercise~12.59 in \cite{Rockafellar09}]\label{Monosubgrad}
  Under Assumption \ref{Assume_convex}, for any $x_i,w_i \in {\rm dom} \,f_i$ we have
  \begin{equation}
    \langle s_i - t_i , x_i - w_i\rangle \geq \mu_i \|x_i - w_i\|_2^2,
    \qquad \forall s_i \in \partial f_i(x_i),\; t_i \in \partial f_i(w_i), \quad i = 1,\dots,n.
  \end{equation}
\end{lemma}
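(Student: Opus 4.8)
The plan is to derive the claimed inequality directly from the definition of strong convexity in~\eqref{strongly_convex_1}, applied twice with the roles of the two points interchanged. Fix $i \in \{1,\dots,n\}$ and take arbitrary $x_i, w_i \in \dom f_i$ together with $s_i \in \partial f_i(x_i)$ and $t_i \in \partial f_i(w_i)$. First I would invoke strong convexity at the base point $x_i$ to write
\[
  f_i(w_i) \geq f_i(x_i) + \langle s_i, w_i - x_i\rangle + \frac{\mu_i}{2}\|w_i - x_i\|_2^2,
\]
and then again at the base point $w_i$ to write
\[
  f_i(x_i) \geq f_i(w_i) + \langle t_i, x_i - w_i\rangle + \frac{\mu_i}{2}\|x_i - w_i\|_2^2.
\]
Here I am reading the notation $\langle \partial f_i(x_i), w_i - x_i\rangle$ in~\eqref{strongly_convex_1} as holding for every element of the subdifferential, which is the standard convention and is what the subgradient inequality guarantees.

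Next I would add the two displayed inequalities. The function values $f_i(x_i)$ and $f_i(w_i)$ cancel from both sides, leaving
\[
  0 \geq \langle s_i, w_i - x_i\rangle + \langle t_i, x_i - w_i\rangle + \mu_i\|x_i - w_i\|_2^2
      = -\langle s_i - t_i, x_i - w_i\rangle + \mu_i\|x_i - w_i\|_2^2,
\]
where I used $\langle t_i, x_i - w_i\rangle = -\langle t_i, w_i - x_i\rangle$ to combine the two inner products. Rearranging gives $\langle s_i - t_i, x_i - w_i\rangle \geq \mu_i\|x_i - w_i\|_2^2$, which is exactly the assertion of the lemma. Since $x_i$, $w_i$, $s_i$, $t_i$ were arbitrary, the bound holds for all admissible choices, and since $i$ was arbitrary it holds for every block.

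There is essentially no obstacle here: the argument is the textbook derivation of strong monotonicity of the subdifferential from strong convexity (this is why the statement is attributed to Theorem~12.53 and Exercise~12.59 in~\cite{Rockafellar09}), and the only point requiring a word of care is the interpretation of the subgradient notation in~\eqref{strongly_convex_1}, which I would clarify at the outset. If one wished to avoid even citing the function-value form, an alternative would be to appeal directly to the cited results in~\cite{Rockafellar09} and omit the computation, but the two-line proof above is self-contained and I would include it for completeness.
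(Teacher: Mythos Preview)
Your argument is correct and is the standard two-line derivation of strong monotonicity from the strong-convexity inequality~\eqref{strongly_convex_1}. The paper itself does not supply a proof of this lemma at all; it simply attributes the result to Theorem~12.53 and Exercise~12.59 of~\cite{Rockafellar09} and moves on, so there is nothing to compare against beyond noting that your self-contained computation is exactly what those references establish.
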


The following matrices will be important for defining the
regularization matrices used in our algorithm, and will also be used
in our convergence proof. In particular, we define the block diagonal matrix $A_D$, and the strictly upper triangular matrix $A_\bigtriangleup$ as
  \begin{equation}\label{ADandAT}
    A_\bigtriangleup \eqdef
    \begin{bmatrix}
       & & A_2 & \dots &A_n\\
       &  & &\ddots &\vdots\\
       & & & &A_n\\
       & &  & &
    \end{bmatrix} \quad \text{and} \quad
    A_D \eqdef
    \begin{bmatrix}
       A_1& &   \\
       & \ddots & \\
       & &  A_n
    \end{bmatrix},
  \end{equation}
  where $\{A_\bigtriangleup,A_D\} \subset \R^{mn \times N}$. We then have the strictly (block) upper triangular matrix
\begin{equation}\label{ADAT}
  A_D^TA_\bigtriangleup =  \begin{bmatrix}
       & & A_1^TA_2 & \dots &A_1^TA_n\\
       &  & &\ddots &\vdots\\
       & & & &A_{n-1}^TA_n\\
       & &  & &
    \end{bmatrix} \in \R^{N \times N}.
\end{equation}
Notice that $A_D^TA_\bigtriangleup$ is equivalent to ${ \rm
  triu}^+\!(A^TA),$ where ${ \rm triu}^+\!(X)$ denotes the strictly
upper (block) triangular part of $X$. We are now in a position to describe the details of our F-ADMM method.

\subsection{The Algorithm}\label{SS_GADMM}

Our F-ADMM method is stated formally as Algorithm~\ref{GSADMM}. As for
J-ADMM, F-ADMM requires the choice of a penalty parameter $\rho > 0$ and
regularization matrices $\{P_i\}_{i=1}^n$.  Our convergence analysis
considered in Section~\ref{SS_Convergence} requires them to satisfy the following
assumption that uses the definition of $\mu$ in~\eqref{def-mu}.

\begin{assumption}\label{Assume_Pi}
  The matrices $P_i$ are symmetric and satisfy $P_i \succ \frac{\rho^2}{2\mu}\|A_D^TA_\bigtriangleup\|_2^2 I$ for all $i = 1,\dots,n$.
\end{assumption}


\begin{algorithm}[H]
\caption{F-ADMM for solving problem~\eqref{Problem}.}\label{GSADMM}
  \begin{algorithmic}[1]
    \State \textbf{Initialize:} $x^{(0)}\in\R^N$, $y^{(0)} \in \R^m$,
    iteration counter $k=0$, parameters $\rho>0$, $\gamma \in(0,2)$,
    and matrices $\{P_i\}_{i=1}^n$ satisfying Assumption~\ref{Assume_Pi}.
    \While {stopping condition has not been met}
    \State Update the primal variables in a Gauss-Seidel fashion:\label{step-subproblem}
    \begin{eqnarray*}
      \displaystyle x_1^{(k+1)} \!\!&\leftarrow& \!\!\arg \min_{x_1} \Big{\{}f_1(x_1) +  \frac{\rho}2\|A_1x_1 + \sum_{j =2}^n A_jx_j^{(k)} - b - \frac{y\kk}{\rho} \|_2^2  + \frac{1}{2}\|x_1-x_1\kk\|_{P_1}^2\Big{\}}\\
      &\vdots&\\
      \displaystyle x_i^{(k+1)} \!\!&\leftarrow& \!\!\arg  \min_{x_i} \Big{\{}f_i(x_i) +  \frac{\rho}2\|A_ix_i + \sum_{j=1}^{i-1} A_jx_j\kpo + \sum_{l=i+1}^{n} A_lx_l\kk - b - \frac{y\kk}{\rho} \|_2^2 + \frac{1}{2}\|x_i-x_i\kk\|_{P_i}^2\Big{\}}\\
      &\vdots&\\
      \displaystyle x_n^{(k+1)} \!\!&\leftarrow& \!\!\arg  \min_{x_n} \Big{\{}f_n(x_n) +  \frac{\rho}2\|A_nx_n + \sum_{j =1}^{n-1} A_jx_j^{(k+1)} - b - \frac{y\kk}{\rho} \|_2^2  + \frac{1}{2}\|x_n-x_n\kk\|_{P_n}^2\Big{\}}
\end{eqnarray*}
\State Update the dual variables: \label{step-yupdate}
\begin{equation}\label{GSADMM_multupdate}
     y^{(k+1)} \leftarrow y^{(k)} - \gamma\rho(Ax^{(k+1)}-b )
\end{equation}
\State Set $k \gets k+1$.
\EndWhile
  \end{algorithmic}
\end{algorithm}

We now describe the $k$th iteration of Algorithm \ref{GSADMM} in more
detail. For fixed dual vector $y\kk$, the current point $x\kk$ is
updated in a Gauss-Seidel (i.e., a cyclic block-wise) fashion.  To
begin, decision vectors $x_2\kk,\dots,x_n\kk$ are fixed, and the first subproblem in
Step~\ref{step-subproblem} is minimized with respect to $x_1$ to give the new point $x\kpo$. Similar to before, we note that the $i$th subproblem in Step~\ref{step-subproblem} is equivalent to
\begin{equation}\label{FADMM_Lagrangeupdate}
  \displaystyle x_i^{(k+1)} \leftarrow \arg \min_{x_i} \, \mathcal{L}_\rho(x_1\kpo,\dots,x_{i-1}\kpo,x_i,x_{i+1}\kk,\dots,x_n\kk;y^{(k)})  + \tfrac12\|x_i - x_i^{(k)}\|_{P_i}^2.
\end{equation}
Next, the second block $x_2$ is updated using the information obtained
in the update of the first block $x_1$. That is, the vectors
$x_3\kk,\dots,x_n\kk$ remain fixed, as does $x_1\kpo$, and the
regularized augmented Lagrangian is minimized with respect to $x_2$ to
give the new point $x_2\kpo$. The process is repeated until all $n$
blocks have been updated, giving the vector $x\kpo$. Finally, the dual vector $y\kk$ is updated using the same formula  as in J-ADMM (see Algorithm~\ref{JADMM}).
Steps~\ref{step-subproblem} and \ref{step-yupdate} are repeated until a stopping threshold has been reached.

\begin{remark}
  It is clear that Algorithm \ref{GSADMM} uses a (serial) cyclic block
  coordinate descent (CD) type method to update the primal vector
  $x$. That is, in Step~\ref{step-subproblem} of Algorithm
  \ref{GSADMM}, a single pass of block CD is applied to the current point $x\kk$ to give the new point $x\kpo$, and then the dual vector is updated.
\end{remark}

\subsection{Convergence}\label{SS_Convergence}

To analyze F-ADMM, we require the block diagonal matrices $G_x$ and
$G$ defined as
\begin{equation}\label{G}
  G_x \eqdef \begin{bmatrix}
    P_1 & &\\
    &\ddots & \\
    & & P_n
  \end{bmatrix} \quad \text{and} \quad
  G \eqdef \begin{bmatrix}
    G_x & \\
    & \frac{1}{\gamma \rho}I
  \end{bmatrix},
\end{equation}
where $I$ is the (appropriately sized) identity matrix, and $\gamma
\in (0,2)$ and $\rho >0$ are algorithm parameters. The following
result gives sufficient conditions for declaring that a limit point of
problem~\eqref{Problem} is optimal. 

\begin{lemma} \label{lem:fp}
 If $\mathcal{K}$ is any subsequence of the natural numbers
 satisfying
\begin{equation}\label{Eq_limit}
  \lim_{k\in\mathcal{K}} u\kk
  = u^{L}
  \ \ \text{and} \ \
  \lim_{k\in\mathcal{K}} \|u\kk - u\kpo\|_G = 0
\end{equation}
 for some limit point $u^{L}$, then $u^{L} \in U^*$, i.e., $u^{L}$
 solves problem~\eqref{Problem}.
\end{lemma}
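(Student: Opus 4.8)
The plan is to pass to the limit, along $\mathcal K$, in the first-order optimality conditions of the $n$ subproblems in Step~\ref{step-subproblem} and in the dual update~\eqref{GSADMM_multupdate}. The second condition in~\eqref{Eq_limit} is exactly what kills every error term in this limiting process, after which closedness of the subdifferentials $\partial f_i$ produces the stationarity inclusion that, together with primal feasibility, characterizes membership in $U^*$.

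First I would unpack~\eqref{Eq_limit}. The matrix $G$ in~\eqref{G} is block diagonal with blocks $P_1,\dots,P_n$ and $\tfrac{1}{\gamma\rho}I$, all positive definite by Assumption~\ref{Assume_Pi} and $\gamma\rho>0$; hence $\|u\kk-u\kpo\|_G\to 0$ forces $x_i\kk-x_i\kpo\to 0$ for every $i$ and $y\kk-y\kpo\to 0$ along $\mathcal K$. Writing $u^L=(x^L,y^L)$ and combining with $u\kk\to u^L$, we also get $x\kpo\to x^L$ and $y\kpo\to y^L$ along $\mathcal K$. Substituting $y\kk-y\kpo\to 0$ into~\eqref{GSADMM_multupdate} yields $Ax\kpo-b\to 0$, and continuity of $x\mapsto Ax$ then gives $Ax^L-b=0$, which is the primal feasibility half of the KKT system.

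Next I would write the optimality condition for the $i$th subproblem: since the objective is $f_i$ plus two smooth convex quadratics, there is $s_i\kpo\in\partial f_i(x_i\kpo)$ with
\[
  s_i\kpo = A_i^{T}y\kk - \rho A_i^{T}\Big(A_ix_i\kpo+\sum_{j<i}A_jx_j\kpo+\sum_{l>i}A_lx_l\kk-b\Big) - P_i\big(x_i\kpo-x_i\kk\big).
\]
The crucial manipulation is to recognize the bracketed residual as $(Ax\kpo-b)+\sum_{l>i}A_l(x_l\kk-x_l\kpo)$, so that
\[
  s_i\kpo = A_i^{T}y\kk - \rho A_i^{T}(Ax\kpo-b) - \rho A_i^{T}\!\sum_{l>i}A_l(x_l\kk-x_l\kpo) - P_i\big(x_i\kpo-x_i\kk\big).
\]
Along $\mathcal K$ the last three terms tend to $0$ by the limits established above (and continuity of the relevant linear maps), while $x_i\kpo\to x_i^L$; hence $s_i\kpo\to A_i^{T}y^L$. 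Since $f_i$ is closed and convex, the graph of $\partial f_i$ is closed, so $A_i^{T}y^L\in\partial f_i(x_i^L)$ for every $i=1,\dots,n$. Because $f$ is separable, $\partial f(x^L)=\partial f_1(x_1^L)\times\cdots\times\partial f_n(x_n^L)$ and $A^{T}y^L=(A_1^{T}y^L,\dots,A_n^{T}y^L)$, so $A^{T}y^L\in\partial f(x^L)$. Combined with $Ax^L=b$, this is precisely $u^L\in U^*$.

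The step I expect to require the most care is the rewriting of the per-block residual $A_ix_i\kpo+\sum_{j<i}A_jx_j\kpo+\sum_{l>i}A_lx_l\kk-b$ as the full residual $Ax\kpo-b$ plus the correction $\sum_{l>i}A_l(x_l\kk-x_l\kpo)$: this is where the Gauss--Seidel ordering of the updates enters, and it is the hypothesis $\|u\kk-u\kpo\|_G\to 0$ that makes this correction asymptotically negligible. The remaining ingredients --- continuity of linear maps, outer semicontinuity of each $\partial f_i$, and separability of $f$ --- are routine. (Note that strong convexity, Assumption~\ref{Assume_convex}, plays no role in this lemma; only closedness and convexity of the $f_i$ are used.)
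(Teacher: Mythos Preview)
Your proposal is correct and follows essentially the same approach as the paper: pass to the limit in the dual update to get primal feasibility, then in the per-block optimality conditions to get stationarity, using closedness of the graph of $\partial f_i$. The only cosmetic difference is that the paper substitutes $b=Ax^L$ into the $i$th residual and writes everything as differences with $x^L$, whereas you split the residual as $(Ax\kpo-b)+\sum_{l>i}A_l(x_l\kk-x_l\kpo)$; both decompositions vanish in the limit for the same reasons, and your observation that strong convexity is not used here is accurate.
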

\begin{proof}
Let us first observe that the two limits in~\eqref{Eq_limit} jointly
imply that
\begin{equation} \label{Eq_limit_kpo}
  \lim_{k\in\mathcal{K}} u\kpo
  = u^L
  \equiv \begin{pmatrix} x^L \\ y^L \end{pmatrix}.
\end{equation}
Also, it follows from \eqref{Eq_limit} and the definitions of $u\kk$
(see~\eqref{u}) and $G$ (see~\eqref{G}), that $\lim_{k\in\mathcal{K}} (y\kk
- y\kpo) = 0$. Combining this
with \eqref{GSADMM_multupdate},
\eqref{Eq_limit_kpo}, and
\eqref{Eq_limit}
establishes that
\begin{equation}\label{constraintsatisfied}
  b
  = \lim_{k\in\mathcal{K}} A x\kpo
  = Ax^L
  = \lim_{k\in\mathcal{K}} A x\kk
\end{equation}
so that, in particular, $x^L$ is feasible for problem~\eqref{Problem}.


Next, the optimality condition for the $i$th subproblem in
Step~\ref{step-subproblem} of Algorithm~\ref{GSADMM}
ensures the existence of a vector
$g_i(x_i^{(k+1)}) \in \partial f_i(x_i\kpo)$ satisfying
%
\begin{align*}
  0 &= g_i(x_i^{(k+1)}) + \rho A_i^T\Big(A_ix_i\kpo + \sum_{j=1}^{i-1 }A_jx_j\kpo + \sum_{l=i+1}^{n}A_lx_l^{(k)} - b - \frac{y\kk}{\rho}\Big) + P_i(x_i\kpo - x_i\kk )\\
    &= g_i(x_i^{(k+1)}) - A_i^T y\kk + \rho A_i^T\Big(A_ix_i\kpo + \sum_{j=1}^{i-1 }A_jx_j\kpo + \sum_{l=i+1}^{n}A_lx_l^{(k)} - b \Big) + P_i(x_i\kpo - x_i\kk )\\
    &= g_i(x_i^{(k+1)}) - A_i^T y\kk + \rho A_i^T\Big(\sum_{j=1}^{i}A_jx_j\kpo + \sum_{l=i+1}^{n}A_lx_l^{(k)} - Ax^L \Big) + P_i(x_i\kpo - x_i\kk ),
\end{align*}
where we also used~\eqref{constraintsatisfied} to substitute for $b$ in the last equation. Using  $Ax^L = \sum_{j=1}^n A_jx_j^L$ and rearranging the previous equation gives
$$
g_i(x_i^{(k+1)})
=  A_i^T y\kk - \rho A_i^T\Big(\sum_{j=1}^{i}A_j(x_j\kpo-x_j^L) + \sum_{l=i+1}^{n}A_l(x_l\kk-x_l^L) \Big) - P_i(x_i\kpo - x_i\kk ).
$$
By taking limits over the subsequence $\mathcal{K}$ of the previous equation, and using \eqref{Eq_limit} and~\eqref{Eq_limit_kpo}, we know that
\begin{equation} \label{sg-limit}
  \lim_{k\in\mathcal{K}} g_i(x_i^{(k+1)}) = A_i^T y^L.
\end{equation}
We may then
use $g_i(x_i^{(k+1)}) \in \partial f_i(x_i\kpo)$, \eqref{Eq_limit_kpo}, \eqref{sg-limit}, and~\cite[Theorem~24.4]{Rockafellar70} to conclude that
$$
A_i^T y^L \in \partial f_i(x_i^{L}).
$$
Combining this inclusion, which holds for all $1 \leq i \leq n$,  with
\eqref{constraintsatisfied} shows that $u^{L}$ is a KKT point for problem \eqref{Problem}, and thus is a solution as claimed.
\end{proof}

Our aim is to combine Lemma~\ref{lem:fp} with the next
result, which shows that the sequence $\{\|u_k-u^*\|_G\}$ is nonexpansive with
respect to any $u^* \in U^*$. We
note that the proof is inspired by that for
J-ADMM~\cite{Deng14}.
\begin{theorem}\label{Thm_convergence}
  Let Assumptions \ref{Assume_saddlepoint}, \ref{Assume_convex},
  and \ref{Assume_Pi} hold.  Then, for any $u^*\in
  U^*$ and all $k \geq 1$,  there exists a constant $\eta >0$ such that
  \begin{equation}\label{contraction}
    \|u\kk - u^*\|_G^2 - \|u\kpo - u^*\|_G^2 \geq \eta \|u\kk - u\kpo\|_G^2
  \end{equation}
  with $u\kk$ defined in \eqref{u} and $G$ defined in \eqref{G}.
\end{theorem}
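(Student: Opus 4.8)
The plan is to establish the fundamental inequality~\eqref{contraction} by the standard "energy descent" argument for ADMM-type methods, adapted from the J-ADMM analysis in~\cite{Deng14}, but accounting for the Gauss-Seidel (rather than Jacobi) coupling between blocks. First I would introduce the residuals $\hat{x}\kk \eqdef x\kk - x^*$ and $\hat{y}\kk \eqdef y\kk - y^*$, and write $\hat{u}\kk = u\kk - u^*$. I would start from the optimality conditions for the $n$ subproblems in Step~\ref{step-subproblem} (already written out in the proof of Lemma~\ref{lem:fp}), which give $g_i(x_i\kpo) \in \partial f_i(x_i\kpo)$ with an explicit formula involving $A_i^T y\kk$, the cross terms $\rho A_i^T(\sum_{j<i} A_j x_j\kpo + \sum_{l>i} A_l x_l\kk - b)$, and the regularization term $P_i(x_i\kpo - x_i\kk)$. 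Pairing each such identity with $x_i\kpo - x_i^*$ and summing over $i$, then using strong monotonicity of $\partial f_i$ (Lemma~\ref{Monosubgrad}) together with the KKT inclusion $A_i^T y^* \in \partial f_i(x_i^*)$, produces a lower bound of the form $\sum_i \mu_i \|x_i\kpo - x_i^*\|_2^2 \le \langle \text{(linear terms in residuals)} \rangle$.

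Next I would process the dual update: from~\eqref{GSADMM_multupdate}, $Ax\kpo - b = -\tfrac{1}{\gamma\rho}(y\kpo - y\kk)$, and since $Ax^* - b = 0$ we get $A\hat{x}\kpo = -\tfrac{1}{\gamma\rho}(\hat{y}\kpo - \hat{y}\kk)$. Substituting this throughout the inequality from the previous step, and organizing the quadratic forms, should yield a relation that can be massaged into the telescoping shape
\begin{equation*}
  \|\hat{u}\kk\|_G^2 - \|\hat{u}\kpo\|_G^2 \;\ge\; \|\hat{u}\kk - \hat{u}\kpo\|_G^2 \;+\; \big(\text{cross term}\big) \;+\; 2\sum_{i=1}^n \mu_i \|x_i\kpo - x_i^*\|_2^2 \;-\; \big(\text{error term}\big),
\end{equation*}
where the "cross term" collects the Gauss-Seidel coupling $\rho \sum_i A_i^T(\sum_{j<i}A_j(x_j\kpo - x_j\kk))$ paired against residuals — precisely the contribution governed by the strictly upper triangular matrix $A_D^T A_\bigtriangleup$ — and the "error term" is a corresponding quantity of order $\rho^2 \|A_D^T A_\bigtriangleup\|_2^2 \|x\kk - x\kpo\|_2^2$.

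The crux of the argument — and the step I expect to be the main obstacle — is absorbing that Gauss-Seidel cross term. The idea is to bound it below using Cauchy-Schwarz/Young's inequality: a term like $\rho \langle A_D^T A_\bigtriangleup (x\kpo - x\kk), \, 2(x\kpo - x^*)\rangle$ is split so that the "$x\kpo - x\kk$" part is controlled by $\tfrac{\rho^2}{2\mu}\|A_D^T A_\bigtriangleup\|_2^2 \|x\kpo - x\kk\|_2^2$ and the "$x\kpo - x^*$" part by $\mu \sum_i \|x_i\kpo - x_i^*\|_2^2$ (using $\sum_i \mu_i\|\cdot\|^2 \ge \mu \sum_i \|\cdot\|^2$). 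The strong-convexity surplus $2\sum_i\mu_i\|x_i\kpo - x_i^*\|_2^2$ then dominates the residual-dependent half of the cross term, while the remaining $\tfrac{\rho^2}{2\mu}\|A_D^T A_\bigtriangleup\|_2^2\|x\kpo - x\kk\|_2^2$ term is exactly what Assumption~\ref{Assume_Pi} ($P_i \succ \tfrac{\rho^2}{2\mu}\|A_D^T A_\bigtriangleup\|_2^2 I$) lets us absorb into $\|\hat{x}\kk - \hat{x}\kpo\|_{G_x}^2$ with room to spare. That strict inequality in the assumption is what yields a genuine $\eta > 0$: setting $\eta \eqdef \min\{\,2-\gamma,\ c\,\}$ where $c>0$ quantifies the slack $\lambda_{\min}(P_i) - \tfrac{\rho^2}{2\mu}\|A_D^T A_\bigtriangleup\|_2^2 > 0$ normalized appropriately against $\|G\|$, and noting the dual block contributes the factor $(2-\gamma)$ in the standard way, completes~\eqref{contraction}. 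I would close by remarking that~\eqref{contraction} immediately implies $\{\|u\kk - u^*\|_G\}$ is monotone (hence $\{u\kk\}$ bounded) and $\sum_k \|u\kk - u\kpo\|_G^2 < \infty$, so $\|u\kk - u\kpo\|_G \to 0$; combined with Lemma~\ref{lem:fp} applied to any convergent subsequence, this will give global convergence of $\{u\kk\}$ to a point in $U^*$.
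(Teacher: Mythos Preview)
Your proposal is correct and follows essentially the same route as the paper: optimality conditions plus strong monotonicity, substitution of the dual update, recognition of the Gauss--Seidel cross term as the $A_D^T A_\bigtriangleup$ pairing, Young's inequality with parameter $\mu$ to trade it against the strong-convexity surplus, and Assumption~\ref{Assume_Pi} to extract a strictly positive $\eta$. Two small slips to fix when you write it out: the cross term that survives after subtracting $Ax\kpo-b$ involves the \emph{not-yet-updated} blocks $\sum_{l>i}A_l(x_l\kk-x_l\kpo)$ (upper-triangular, matching $A_D^T A_\bigtriangleup$), not $\sum_{j<i}$; and the dual contribution yields the factor $(2-\gamma)/\gamma$ rather than $2-\gamma$ when compared against the $G$-norm.
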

\begin{proof}
At each iteration of Algorithm \ref{GSADMM}, a subproblem of the following form is solved for $x_i$:
\begin{eqnarray}\label{Alg_xi_subproblem}
  x_i^{(k+1)} = \arg \min_{x_i} \Big{\{}f_i(x_i) + \frac{\rho}{2}\|A_ix_i + \sum_{j=1}^{i-1}A_jx_j^{(k+1)} + \sum_{l=i+1}^{n}A_lx_l^{(k)} - b - \frac{y\kk}{\rho}\|_2^2 + \frac12\|x_i - x_i\kk\|_{P_i}\Big{\}}.
\end{eqnarray}
The first order optimality condition for \eqref{Alg_xi_subproblem} is
\begin{eqnarray*}
  0 \in \partial f_i(x_i^{(k+1)}) + \rho A_i^T\left(\sum_{j=1}^{i}A_jx_j\kpo + \sum_{l=i+1}^{n}A_lx_l^{(k)} - b - \frac{y\kk}{\rho}\right) + P_i(x_i\kpo - x_i\kk ),
\end{eqnarray*}
and rearranging gives
\begin{eqnarray*}
   \rho A_i^T\left(b + \frac{y\kk}{\rho} - \sum_{j=1}^{i}A_jx_j\kpo - \sum_{l=i+1}^{n}A_lx_l^{(k)}\right) + P_i(x_i\kk - x_i\kpo ) \in \partial f_i(x_i^{(k+1)}).
\end{eqnarray*}
Noting that $\sum_{l=i+1}^nA_l x_l\kpo - Ax\kpo = - \sum_{j=1}^iA_jx_j\kpo$ and defining $\hat{y} \eqdef y\kk - \rho(Ax^{(k+1)}-b)$ gives
\begin{eqnarray*}
   A_i^T\hat{y} - \rho A_i^T\left(\sum_{j=i+1}^{n}A_j(x_j\kk - x_j\kpo)\right) + P_i(x_i\kk - x_i\kpo ) \in \partial f_i(x_i^{(k+1)}).
\end{eqnarray*}
Using Lemma \ref{Monosubgrad} with $A_i^Ty^* \in \partial f_i(x_i^*)$, we have
\begin{align*}
  \mu_i\|x_i\kpo - x_i^*\|_2^2
  &\leq
  \left\langle x_i^{(k+1)}-x_i^*,A_i^T(\hat{y}- y^*) - \rho A_i^T\!\!\sum_{j=i+1}^{n}\!A_j(x_j\kk - x_j\kpo) +P_i(x_i\kk - x_i\kpo )\right\rangle.
\end{align*}
Now, summing the previous inequality over all blocks $i$ gives
\begin{align}
  \mu\|x\kpo - x^*\|_2^2
  &\leq \left\langle A(x^{(k+1)}-x^*),\hat{y} - y^* \right\rangle -
  \rho\sum_{i=1}^n\Big{\langle}
  A_i(x_i^{(k+1)}-x_i^*),\sum_{j=i+1}^{n}A_j(x_j\kk -
  x_j\kpo)\Big{\rangle} \nonumber \\
  &+  \sum_{i=1}^n(x_i^{(k+1)}-x_i^*)^TP_i(x_i\kk - x_i\kpo ),
  \label{Eq_innerprod2}
\end{align}
where $\mu > 0$ is defined in~\eqref{def-mu}.
Notice that, by \eqref{GSADMM_multupdate} the following relation holds
\begin{equation}\label{Alambdarelation}
  A(x^{(k+1)}-x^*) = \tfrac{1}{\gamma \rho}(y\kk-y\kpo),
\end{equation}
and we also have that
\begin{equation}\label{Lambdarelation}
  \hat{y} - y^* = (\hat{y} - y\kpo) + (y\kpo - y^*) = \frac{\gamma-1}{\gamma}(y\kk - y\kpo)+(y\kpo - y^*).
\end{equation}
Then \eqref{Eq_innerprod2} becomes
\begin{eqnarray}\label{Eq_PAinnerprod}
\notag
  && \sum_{i=1}^n(x_i^{(k+1)}-x_i^*)^TP_i(x_i\kk - x_i\kpo ) - \rho\sum_{i=1}^n\Big{\langle} A_i(x_i^{(k+1)}-x_i^*),\sum_{j=i+1}^{n}A_j(x_j\kk - x_j\kpo)\Big{\rangle}\\
   \notag
   &\overset{\eqref{Alambdarelation}}{\geq}& \mu\|x\kpo - x^*\|_2^2-\frac{1}{\gamma \rho}\langle y^{(k)}-y^{(k+1)},\hat{y} - y^* \rangle \\
  &\overset{\eqref{Lambdarelation}}{=}& \mu\|x\kpo - x^*\|_2^2-\frac{1}{\gamma \rho}\langle y^{(k)}-y^{(k+1)},y\kpo - y^* \rangle  +  \frac{1-\gamma}{\gamma^2 \rho}\| y\kk - y\kpo \|_2^2.
\end{eqnarray}
Using the identity
  \begin{equation}\label{sum_to_matrix}
    \sum_{i=1}^n\Big{\langle} A_i(x_i^{(k+1)}-x_i^*), \sum_{j=i+1}^{n}A_j(x_j\kk-x_j\kpo)\Big{\rangle} \overset{\eqref{ADAT}}{=}
       \Big{\langle} x^{(k+1)}-x^*,A_D^TA_\bigtriangleup(x\kk-x\kpo) \Big{\rangle},
  \end{equation}
we may deduce from~\eqref{Eq_PAinnerprod} and the definition of $G_x$ that
\begin{eqnarray}\label{sum_to_matrix2}
\notag
  && \sum_{i=1}^n(x_i^{(k+1)}-x_i^*)^TP_i( x_i\kk-x_i\kpo) \\
  \notag
  &=& \big{\langle} x^{(k+1)}-x^*,G_x(x\kk-x\kpo) \big{\rangle}\\
  \notag
  &\geq& \mu\|x\kpo - x^*\|_2^2-\frac{1}{\gamma \rho}\langle y^{(k)}-y^{(k+1)},y\kpo - y^* \rangle \\
  && + \,\frac{1-\gamma}{\gamma^2 \rho}\| y\kk - y\kpo \|_2^2 + \rho\Big{\langle} x^{(k+1)}-x^*,A_D^TA_\bigtriangleup(x\kk-x\kpo) \Big{\rangle}.
\end{eqnarray}
Then, by rearranging
\eqref{sum_to_matrix2} we have
\begin{align}
 (u\kpo - u^*)G(u\kk - u\kpo)
 &\geq \mu\|x\kpo - x^*\|_2^2 +  \frac{1-\gamma}{\gamma^2 \rho}\| y\kk
  - y\kpo \|_2^2 \nonumber\\
 &+ \rho\Big{\langle} x^{(k+1)}-x^*,A_D^TA_\bigtriangleup(x\kk-x\kpo) \Big{\rangle},
  \label{Glowerbound}
\end{align}
where $G$ is defined in \eqref{G}. Combining the relation
\begin{equation*}
   \|u\kk - u^*\|_G^2 - \|u\kpo - u^*\|_G^2 = 2(u\kpo - u^*)G(u\kk - u\kpo) + \|u\kk - u\kpo\|_G^2
\end{equation*}
with \eqref{Glowerbound} gives
\begin{align}
  \|u\kk - u^*\|_G^2 - \|u\kpo - u^*\|_G^2 
  &\geq 2\mu\|x\kpo - x^*\|_2^2 +  2\frac{1-\gamma}{\gamma^2 \rho}\|
  y\kk - y\kpo \|_2^2 + \|u\kk - u\kpo\|_G^2 \nonumber\\
  &+ 2\rho\Big{\langle}
  x^{(k+1)}-x^*,A_D^TA_\bigtriangleup(x\kk-x\kpo) \Big{\rangle} \nonumber\\
  &= 2\mu\|x\kpo - x^*\|_2^2 + \frac{2-\gamma}{\gamma^2 \rho}\| y\kk -
  y\kpo \|_2^2 + \|x\kk - x\kpo\|_{G_x}^2 \nonumber\\
  &+ 2\rho\Big{\langle}
  x^{(k+1)}-x^*,A_D^TA_\bigtriangleup(x\kk-x\kpo) \Big{\rangle}.
  \label{exp1}
\end{align}
Notice that, because $\mu>0$, the following holds:
  \begin{equation}\label{exp2}
    2\rho\Big{\langle} x^{(k+1)}-x^*,A_D^TA_\bigtriangleup(x\kk-x\kpo) \Big{\rangle}
    \geq
    -2\mu\|x^{(k+1)}-x^*\|_2^2 - \frac{\rho^2}{2\mu}\|A_D^TA_\bigtriangleup(x\kk-x\kpo)\|_2^2.
  \end{equation}
Now, combining \eqref{exp1} and \eqref{exp2} gives
  \begin{eqnarray}
   \|u\kk - u^*\|_G^2 &-& \|u\kpo - u^*\|_G^2 \nonumber\\
   &\geq& \frac{2-\gamma}{\gamma^2 \rho}\| y\kk
   - y\kpo \|_2^2 + \|x\kk - x\kpo\|_{G_x}^2
   - \frac{\rho^2}{2\mu}\|A_D^TA_\bigtriangleup\|_2^2\|x\kk-x\kpo\|_2^2 \nonumber\\
   &=& \frac{2-\gamma}{\gamma^2 \rho}\| y\kk - y\kpo \|_2^2  + \sum_{i=1}^n\|x_i\kk - x_i\kpo\|_{P_i - \frac{\rho^2}{2\mu}\|A_D^TA_\bigtriangleup\|_2^2I}^2
   \label{hi}
  \end{eqnarray}
  and note that Assumption \ref{Assume_Pi} guarantees that
  $$
  T_i
  := P_i - \frac{\rho^2}{2\mu}\|A_D^TA_\bigtriangleup\|_2^2I \succ 0.
  $$
  If we then let $\eta_i := \lambda_{min}(T_i)/\|P_i\|_2 > 0$, we
  have from the definition of $T_i$ and standard norm inequalities
  \begin{align*}
  \|x_i\kk - x_i\kpo\|_{P_i - \frac{\rho^2}{2\mu}\|A_D^TA_\bigtriangleup\|_2^2I}^2
  &= \|x_i\kk - x_i\kpo\|_{T_i}^2 \\
  &\geq \lambda_{min}(T_i)\|x_i\kk - x_i\kpo\|_2^2 \\
  &= \eta_i\|P_i\|_2 \|x_i\kk - x_i\kpo\|_2^2 \\
  &\geq \eta_i (x_i\kk - x_i\kpo)^T P_i(x_i\kk - x_i\kpo)
  = \eta_i \|x_i\kk - x_i\kpo\|_{P_i}^2.
  \end{align*}
  Combining this with~\eqref{hi} gives
  $$
  \|u\kk - u^*\|_G^2 - \|u\kpo - u^*\|_G^2
  \geq \frac{2-\gamma}{\gamma^2 \rho}\| y\kk - y\kpo \|_2^2
  + \sum_{i=1}^n \eta_i \|x_i\kk - x_i\kpo\|_{P_i}^2.
  $$
  From the previous inequality and the definition
  $$
  \eta
  := \min\left\{\frac{2-\gamma}{\gamma}, \min_{1\leq i\leq n}\eta_i\right\} > 0,
  $$
  we have
  $$
  \|u\kk - u^*\|_G^2 - \|u\kpo - u^*\|_G^2
  \geq \eta \left(\frac{1}{\gamma \rho}\| y\kk - y\kpo \|_2^2
  + \sum_{i=1}^n \|x_i\kk - x_i\kpo\|_{P_i}^2\right)
  = \eta\|u\kk - u\kpo\|_G^2,
  $$
  which is the desired result.
\end{proof}

We we may now state our main convergence result for Algorithm~\ref{GSADMM}.
\begin{theorem}
 If the conditions of Theorem \ref{Thm_convergence} hold, then the
 sequence $\{u\kk\}_{k\geq 0}$ generated by Algorithm \ref{GSADMM}
 converges to some vector $u^L$ that is a solution to problem \eqref{Problem}.
\end{theorem}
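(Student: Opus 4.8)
The plan is to derive the result from Theorem~\ref{Thm_convergence} together with Lemma~\ref{lem:fp} via a standard Fej\'er-monotonicity argument. First I would record that the matrix $G$ in~\eqref{G} is symmetric and positive definite: its block $G_x$ is block diagonal with the positive definite blocks $\{P_i\}$ guaranteed by Assumption~\ref{Assume_Pi}, and the remaining block $\tfrac{1}{\gamma\rho}I$ is positive definite since $\gamma,\rho>0$. Hence $\|\cdot\|_G$ is a norm on $\R^{N+m}$, equivalent to the Euclidean norm, so it suffices to establish convergence of $\{u\kk\}$ in $\|\cdot\|_G$.

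Next I would fix any $u^*\in U^*$ (nonempty by Assumption~\ref{Assume_saddlepoint}) and read off two consequences of the contraction~\eqref{contraction}. Since $\eta>0$, the sequence $\{\|u\kk-u^*\|_G^2\}_{k\ge 1}$ is monotonically non-increasing, hence bounded, so $\{u\kk\}$ is bounded. Summing~\eqref{contraction} over $k=1,\dots,K$ and letting $K\to\infty$ gives $\eta\sum_{k\ge 1}\|u\kk-u\kpo\|_G^2\le\|u^{(1)}-u^*\|_G^2<\infty$, and in particular $\|u\kk-u\kpo\|_G\to 0$. Boundedness of $\{u\kk\}$ then yields a subsequence $\mathcal{K}$ and a point $u^L$ with $\lim_{k\in\mathcal{K}}u\kk=u^L$; because $\|u\kk-u\kpo\|_G\to0$ holds along the full sequence, both conditions in~\eqref{Eq_limit} are satisfied along $\mathcal{K}$, so Lemma~\ref{lem:fp} gives $u^L\in U^*$, i.e., $u^L$ solves problem~\eqref{Problem}.

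To pass from this subsequence to the full sequence, I would invoke~\eqref{contraction} a second time with the choice $u^*=u^L$, which is legitimate since $u^L\in U^*$. This makes $\{\|u\kk-u^L\|_G^2\}_{k\ge 1}$ monotonically non-increasing, hence convergent; but its subsequence over $\mathcal{K}$ tends to $0$ because $u\kk\to u^L$ along $\mathcal{K}$, so the limit of the whole monotone sequence must also be $0$. Therefore $\|u\kk-u^L\|_G\to0$, and by the norm equivalence noted above $u\kk\to u^L$, which is the claim.

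The substantive analytic work is already contained in Theorem~\ref{Thm_convergence} (which supplies $\eta>0$ and the contraction) and Lemma~\ref{lem:fp} (which certifies optimality of limit points), so this wrap-up has no real obstacle; the only points requiring care are verifying $G\succ0$ so that $\|\cdot\|_G$-convergence is meaningful and implies ordinary convergence, and using the elementary fact that a monotone sequence possessing a subsequential limit $\ell$ converges to $\ell$ — which is precisely what bootstraps the bounded-sequence subsequence to the full sequence once $u^L$ has been shown to lie in $U^*$.
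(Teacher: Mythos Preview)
Your argument is correct and follows essentially the same Fej\'er-monotonicity route as the paper's proof: use~\eqref{contraction} to get boundedness and $\|u\kk-u\kpo\|_G\to0$, apply Lemma~\ref{lem:fp} to a convergent subsequence to place its limit $u^L$ in $U^*$, then reapply~\eqref{contraction} with $u^*=u^L$ to upgrade to full-sequence convergence. You are slightly more explicit than the paper about why $G\succ0$ and about the monotone-subsequence step at the end, but the structure is the same.
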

\begin{proof}
  Let $u^*$ be \emph{any} solution in $U^*$.  It then follows from
  Theorem~\ref{Thm_convergence} that
  \begin{equation} \label{eq:bounded}
    \|u\kk - u^*\|_G \leq \|u^{(0)} - u^*\|_G \ \text{for all $k \geq 1$,}
  \end{equation}
  so that $\{u\kk\}_{k\geq 0}$ is a bounded sequence.  Moreover, for any integer
  $p \geq 1$, it follows from~\eqref{Thm_convergence} that
  $$
  \sum_{k=1}^p \eta\|u\kk - u\kpo\|_G^2
  \leq  \sum_{k=1}^p \left( \|u\kk - u^*\|_G^2 - \|u\kpo-u^*\|_G\right)
  = \|u^{(0)} - u^*\|_G^2 - \|u^{(p+1)} - u^*\|_G^2
  \leq \|u^{(0)} - u^*\|^2_G.
  $$
  Taking limits of both sides of the previous inequality as
  $p\to\infty$ shows that the sum is finite, and since all the
  summands are nonnegative that
  \begin{equation} \label{eq:lim-diff}
    \lim_{k\to\infty} \|u\kk - u\kpo\|_G = 0.
  \end{equation}

  Next, using the boundedness of $\{u\kk\}_{k\geq 0}$, we may conclude the existence
  of a subsequence $\mathcal{K}\subseteq \{1,2,\dots\}$ and a vector
  $u^L\in\R^{N+m}$ such that
  \begin{equation} \label{eq:u-limit}
     \lim_{k\in\mathcal{K}} u\kk = u^L.
  \end{equation}
  It follows from~\eqref{eq:u-limit}, \eqref{eq:lim-diff}, and Lemma~\ref{lem:fp}
  that $u_L$ is a solution to
  problem~\eqref{Problem}. Finally, since~\eqref{contraction} held for
  \emph{any} $u^*\in U^*$ and we have proved that $u^L\in U^*$, it
  follows that $\lim_{k\to\infty} u\kk = u^L$, as desired.
\end{proof}

\section{A Hybrid ADMM (H-ADMM)}\label{S_HADMM}
One of the disadvantages of a Gauss-Seidel type updating scheme within ADMM is that it is inherently serial. With problem dimension growing ever larger in this era of big data, and the ubiquity of parallel processing power, a Jacobi type updating scheme may be preferable in many real-world instances of problem \eqref{Problem}. The purpose of this section is to show that if F-ADMM is applied to ``grouped data", and a special choice of regularization matrix is employed for each group, then Algorithm \ref{GSADMM} becomes a \emph{hybrid} Gauss-Seidel/Jacobi ADMM-type method. Therefore, Algorithm \ref{GSADMM} is \emph{partially parallelizable}.

\subsection{Notation and Assumptions}

Suppose that the function $f(x)$ is separable into $n$ blocks, as in
\eqref{Problem_objective}. Then, we can (implicitly) partition the
variables $x_i$ and functions $f_i(x_i)$ together into $\ell < n$
groups. For simplicity of exposition, we will assume that $n$ is
divisible by some $p$, so that $\ell p = n$, which means that we form
$\ell$ groups of $p$ blocks. Then, problem \eqref{Problem} is
equivalent to the following partitioned problem: 
\begin{subequations}\label{Problem_lblockform}
  \begin{eqnarray}
  &\displaystyle \minimize{x\in \R^{N}}& f(x) \equiv \sum_{j=1}^{\ell} \mathbf{f}_j(\mathbf{x}_j)\\
  &\subject& \sum_{j=1}^{\ell} \mathcal{A}_j\mathbf{x}_j = b
\end{eqnarray}
\end{subequations}
with
\begin{eqnarray}\label{Groupedx}
  \mathbf{x}_1 \eqdef \begin{bmatrix}
    x_1\\ \vdots\\ x_{p}
  \end{bmatrix},\quad
  \mathbf{x}_2 \eqdef \begin{bmatrix}
    x_{p+1}\\ \vdots\\ x_{2p}
  \end{bmatrix},\quad
  \dots \quad
  \mathbf{x}_{\ell} \eqdef \begin{bmatrix}
    x_{(\ell-1)p + 1}\\ \vdots\\ x_{n}
  \end{bmatrix},
\end{eqnarray}
\begin{eqnarray*}
  \mathbf{f}_1(\mathbf{x}_1) \eqdef \sum_{i=1}^{p} f_i(x_i), \quad \mathbf{f}_2(\mathbf{x}_2) \eqdef \sum_{i=p+1}^{2p} \!f_i(x_i), \quad\dots \quad \mathbf{f}_{\ell}(\mathbf{x}_{\ell}) \eqdef \sum_{i=(\ell-1)p+1}^{n} \!\!\!\!\!f_i(x_i),
\end{eqnarray*}
and
\begin{eqnarray*}
  \mathcal{A}_1 \eqdef \begin{bmatrix}
    A_1 & \dots & A_{p}
  \end{bmatrix}, \quad
   \mathcal{A}_2 \eqdef \begin{bmatrix}
    A_{p+1} & \dots & A_{2p}
  \end{bmatrix},\quad \dots \quad
  \mathcal{A}_{\ell} \eqdef \begin{bmatrix}
    A_{(n-1)p+1} & \dots & A_{n}
  \end{bmatrix}.
\end{eqnarray*}
Notice that $A = [\mathcal{A}_1, \mathcal{A}_2,\dots \mathcal{A}_{\ell}] \equiv [A_1,A_2,\dots,A_n]$, and $x = [\mathbf{x}_{1}^T,\dots,\mathbf{x}_{\ell}^T]^T \equiv [x_1^T,\dots,x_n^T]^T$. Furthermore, it will be useful to define the index sets
\begin{equation}\label{Groupedindexsets}
\mathcal{S}_1 = \{1,\dots,p\}, \quad \mathcal{S}_2 = \{p+1,\dots,2p\}, \quad \dots\quad \mathcal{S}_{\ell} = \{(\ell-1)p+1,\dots,n\}
\end{equation}
associated with the partition described above, and to use the notation $\mathcal{S}_{i,j}$ to denote the $j$th element of $\mathcal{S}_i$.


We now think of applying Algorithm \ref{GSADMM} to the $\ell$ groups
of data. That is, in Step 3 of Algorithm \ref{GSADMM} we have $\ell$
minimization problems, one for each of the grouped data points
$\mathbf{x}_j$ (rather than $n$ minimization problems, one for each of
the individual data blocks $x_i$). For the grouped data, we require
regularization matrices $\p_1,\dots,\p_{\ell}$, for each of the $\ell$
groups; these matrices will be crucial in our upcoming derivation.

To motivate the idea of ``grouped data'', and to make the ideas that
will be discussed in this rest of this section more concrete, we give
a specific example that shows how our hybrid algorithm will work.
\begin{example}\label{Example_GSJ}
  Suppose there are $n = 12$ blocks and we have access to a parallel computer with $p = 4$ processors. We make a formal partition of the data into $\ell = 3$ groups. That is, we set $\mathbf{f}_1(\mathbf{x}_1) = \sum_{i=1}^4f_i(x_i)$, $\mathbf{f}_2(\mathbf{x}_2) = \sum_{i=5}^8f_i(x_i)$ and $\mathbf{f}_3(\mathbf{x}_3) = \sum_{i=9}^{12}f_i(x_i)$, $\mathbf{x}_1 = [x_1^T,\dots,x_4^T]^T$, $\mathbf{x}_2 = [x_5^T,\dots,x_8^T]^T$, and $\mathbf{x}_3 = [x_9^T,\dots,x_{12}^T]^T$, partition the matrix $A$ accordingly, and initialize index sets $\mathcal{S}_1 = \{1,\dots,4\}$, $\mathcal{S}_2 = \{5,\dots,8\}$ and $\mathcal{S}_3 = \{9,\dots,12\}$. Then, a single iteration of H-ADMM (see Steps 3--7 of Algorithm \ref{HADMM}) will run in the following way. The Lagrange multiplier estimate $y\kk$ and (group) variables $\mathbf{x}_2\kk$ and $\mathbf{x}_3\kk$ are fixed. Group variable $\mathbf{x}_1$ is updated by solving a subproblem of the form \eqref{HGADMM-parallelloop} \emph{for each of} $x_1,\dots,x_4$ \emph{in parallel}. This gives the new point $\mathbf{x}_1\kpo$. Then, $\mathbf{x}_1\kpo$ and $\mathbf{x}_3\kk$ are fixed, and four subproblems of the form \eqref{HGADMM-parallelloop} are solved for each of $x_5,\dots,x_8$ \emph{in parallel}, giving $\mathbf{x}_2\kpo$. Next, $\mathbf{x}_1\kpo$ and $\mathbf{x}_2\kpo$ are fixed, and four subproblems of the form \eqref{HGADMM-parallelloop} are solved for each of $x_9,\dots,x_{12}$ \emph{in parallel}, giving $\mathbf{x}_3\kpo$. Finally, $y^{(k+1)}$ is updated in~\eqref{eq:yup-bd}.
\end{example}
Example \ref{Example_GSJ} shows that Algorithm \ref{HADMM} is running
a Gauss-Seidel process on the group variables, but running a Jacobi
process to update the individual blocks within each group. This
example shows an efficient implementation in the sense that, by
ensuring that the group size $p$ matches the number of processors, all
processors are always engaged, and that updated information is utilized when it is available.

In the rest of this section we explain \emph{how} H-ADMM (Algorithm~\ref{HADMM}) is obtained from F-ADMM.

\subsection{Separability Via Regularization}

We show that, if the regularization matrices
$\{\p_i\}_{i=1}^\ell$ are chosen appropriately, F-ADMM can be
partially parallelized, and forms the hybrid algorithm H-ADMM. In
particular, for the $i$th subproblem in F-ADMM (applied
to the grouped data in \eqref{Problem_lblockform}), the $p$
blocks within the $i$th group can be solved for in parallel. 

In what follows, we use the relationships
\begin{align}
  \|\sum_{j=1}^nA_jx_j\|_2^2
  &= \sum_{j=1}^n \langle A_jx_j,A_jx_j \rangle + \sum_{j=1}^n\sum_{\overset{l=1}{l\neq j}}^n\langle A_jx_j,A_lx_l\rangle
  \label{norm_rel}
  \ \ \text{and} \\
  \sum_{j\in\Sscr_i} \sum_{\overset{l\in\Sscr_i}{l\neq j}} \langle A_l x_l, A_j x_j \rangle
  &= \sum_{j=1}^p \sum_{\overset{l=1}{l\neq j}}^p \langle A_{\Sscr_{i,j}} x_{\Sscr_{i,j}}, A_{\Sscr_{i,l}} x_{\Sscr_{i,l}}\rangle,
  \label{are-equal}
\end{align}
which can easily be verified.  Using the definition of $\Ascr_i$ and a similar reasoning as for~\eqref{norm_rel}, it follows that
\begin{eqnarray}\label{Eq_Aixi}
  \|\mathcal{A}_i\mathbf{x}_i\|_2^2 = \|\sum_{j\in \mathcal{S}_i} A_j x_j\|_2^2
  =\sum_{j\in \mathcal{S}_i} \| A_jx_j\|_2^2 + \sum_{j\in \mathcal{S}_i}\sum_{\overset{l \in \mathcal{S}_i}{l\neq j}} \langle A_jx_j,A_lx_l\rangle.
\end{eqnarray}
We now define
$
\mathbf{b}_i := b - \sum_{q=1}^{i-1}\mathcal{A}_q\mathbf{x}_q\kpo - \sum_{s=i+1}^{\ell}\mathcal{A}_s\mathbf{x}_s\kk,
$
and notice that $\mathbf{b}_i$ is fixed when minimizing the augmented Lagrangian with respect to group $\mathbf{x}_i$. Recalling Algorithm~\ref{GSADMM} and \eqref{FADMM_Lagrangeupdate}, and using~\eqref{Eq_Aixi}, the update for the $i$th subproblem for our grouped data problem \emph{without} the regularization term is equivalent to
\begin{align}
\!\mathbf{x}_i^{(k+1)} &= \arg \min_{\mathbf{x}_i} \, \mathcal{L}_\rho(\mathbf{x}_1\kpo,\dots,\mathbf{x}_{i-1}\kpo, \mathbf{x}_i, \mathbf{x}_{i+1}\kk,\dots,\mathbf{x}_{\ell}\kk;y^{(k)}) \nonumber\\
   &= \arg \min_{\mathbf{x}_i} \Big{\{} \mathbf{f}_i(\mathbf{x}_i) - \langle y^{(k)},\mathcal{A}_i\mathbf{x}_i -\mathbf{b}_i\rangle + \frac{\rho}{2}\|\mathcal{A}_i\mathbf{x}_i -\mathbf{b}_i\|_2^2\Big{\}} \nonumber\\
  &= \arg \min_{\mathbf{x}_i}\Big{\{} \mathbf{f}_i(\mathbf{x}_i) - \langle y^{(k)},\mathcal{A}_i\mathbf{x}_i -\mathbf{b}_i\rangle + \frac{\rho}2\|\mathbf{b}_i\|_2^2+ \frac{\rho}2\|\mathcal{A}_i\mathbf{x}_i\|_2^2  - \rho\langle \mathcal{A}_i\mathbf{x}_i,\mathbf{b}_i \rangle\Big{\}} \nonumber\\
  &= \arg \min_{\mathbf{x}_i}\Big{\{} \mathbf{f}_i(\mathbf{x}_i) - \langle y^{(k)}\!,\mathcal{A}_i\mathbf{x}_i -\mathbf{b}_i\rangle
  - \rho\langle \mathcal{A}_i\mathbf{x}_i,\mathbf{b}_i\rangle
  + \frac{\rho}2\sum_{j\in \mathcal{S}_i} \| A_jx_j\|_2^2   
+ \frac{\rho}2
 \sum_{j\in \mathcal{S}_i}
 \sum_{ \overset{l \in \mathcal{S}_i}{l\neq j}} \langle A_lx_l,A_jx_j\rangle \Big{\}}.\label{xitildeupdate}
\end{align}
Notice that it is the final term in \eqref{xitildeupdate} that makes the minimization of the augmented Lagrangian (with respect to the group $\mathbf{x}_i$) non-separable; it contains a cross product term, which shows interaction between different blocks of variables within the $i$th group indexed by $\mathcal{S}_i$.

\subsubsection{Defining the group regularization matrices}\label{SSS_Pi}
We eliminate the non-separability in \eqref{xitildeupdate} by carefully choosing the regularization matrices $\{\p_i\}_{i=1}^\ell$. From a practical perspective, if the problem is made separable, then the individual blocks within the $i$th group can be \emph{updated in parallel}. To this end, we choose the matrix that defines our regularizer to be
\begin{equation}\label{Pi}
  \p_i \eqdef \begin{bmatrix}
    P_{\mathcal{S}_{i,1}} &  -\rho A_{\mathcal{S}_{i,1}}^TA_{\mathcal{S}_{i,2}} &  &\dots &-\rho A_{\mathcal{S}_{i,1}}^TA_{\mathcal{S}_{i,p}}\\
   -\rho A_{\mathcal{S}_{i,2}}^TA_{\mathcal{S}_{i,1}} & P_{\mathcal{S}_{i,2}} & & &\vdots\\
   \vdots & &\ddots &  &\\
    \vdots & & & P_{\mathcal{S}_{i,p-1}}  &-\rho A_{\mathcal{S}_{i,p-1}}^TA_{\mathcal{S}_{i,p}} \\
   -\rho A_{\mathcal{S}_{i,p}}^TA_{\mathcal{S}_{i,1}} & &\dots& -\rho A_{\mathcal{S}_{i,p}}^TA_{\mathcal{S}_{i,p-1}} & P_{\mathcal{S}_{i,p}}
  \end{bmatrix}.
\end{equation}
We remind the reader that the matrices
$\{P_{\mathcal{S}_{i,j}}\}_{j=1}^p$ used to define $\p_i$ are user
defined symmetric matrices that must be chosen to be sufficiently
positive definite, to ensure that convergence of F-ADMM on the grouped
data is guaranteed. Before we formalize our assumption, we require the
definitions
\begin{equation}\label{ADandATscript}
    \mathcal{A}_\bigtriangleup \eqdef
    \begin{bmatrix}
       & & \mathcal{A}_2 & \dots &\mathcal{A}_{\ell}\\
       &  & &\ddots &\vdots\\
       & & & &\mathcal{A}_{\ell}\\
       & &  & &
    \end{bmatrix} \quad \text{and} \quad
    \mathcal{A}_D \eqdef
    \begin{bmatrix}
       \mathcal{A}_1& &   \\
       & \ddots & \\
       & &  \mathcal{A}_{\ell}
    \end{bmatrix},
  \end{equation}
  where $\{\mathcal{A}_\bigtriangleup,\mathcal{A}_D\} \subset \R^{m\ell \times N}$. We then have the strictly (block) upper triangular matrix
\begin{equation}\label{ADATscript}
  \mathcal{A}_D^T\mathcal{A}_\bigtriangleup =  \begin{bmatrix}
       & & \mathcal{A}_1^T\mathcal{A}_2 & \dots &\mathcal{A}_1^T\mathcal{A}_{\ell}\\
       &  & &\ddots &\vdots\\
       & & & &\mathcal{A}_{\ell-1}^T\mathcal{A}_{\ell}\\
       & &  & &
    \end{bmatrix} \in \R^{N \times N}.
\end{equation}
Notice that the definitions of $\mathcal{A}_D$,
$\mathcal{A}_\bigtriangleup$ and
$\mathcal{A}_D^T\mathcal{A}_\bigtriangleup$ in \eqref{ADandATscript}
and \eqref{ADATscript}, are analogues to $A_D$, $A_\bigtriangleup$ and
$A_D^TA_\bigtriangleup$defined in \eqref{ADandAT} and \eqref{ADAT}.
We are now ready to state our assumption on $\{\p_i\}_{i=1}^\ell$,
which is actually Assumption~\ref{Assume_Pi}
applied to the grouped data problem~\eqref{Problem_lblockform}.

\begin{assumption}\label{Assume_Pj}
  The matrices $\p_i$ are symmetric and satisfy $\p_i \succ \frac{\rho^2}{2\mu}\|\mathcal{A}_D^T\mathcal{A}_\bigtriangleup\|_2^2 I$ for all $i = 1,\dots,\ell$.
\end{assumption}

Importantly, if F-ADMM is applied to the grouped data problem
\eqref{Problem_lblockform} and Assumption~\ref{Assume_Pj} holds,
then convergence is automatic, i.e., convergence of F-ADMM equipped with Assumption~\ref{Assume_Pj} applied to problem \eqref{Problem_lblockform} follows directly from the convergence results presented in Section~\ref{S_GADMM}.

\subsubsection{Incorporating the regularization term}\label{SS_Pj}

Now that the regularization matrices $\{\p_i\}_{i=1}^\ell$ are defined, we return to the non-separability encountered in \eqref{xitildeupdate}.
Recall that the subproblem in Step 3 of F-ADMM (Algorithm \ref{GSADMM}) is equivalent to \eqref{FADMM_Lagrangeupdate}, which in turn is equivalent to \eqref{xitildeupdate} + $\frac12 \|\mathbf{x}_i - \mathbf{x}_i\kk\|_{\p_i}^2$.  We concentrate on the regularization term, and notice that
\begin{eqnarray}
  \frac12\mathbf{x}_i^T\p_i \mathbf{x}_i &\overset{\eqref{Groupedx}+\eqref{Groupedindexsets}}{=}&
 \frac12 \begin{bmatrix}
  x_{\mathcal{S}_{i,1}}^T & \dots &x_{\mathcal{S}_{i,p}}^T
\end{bmatrix}
\begin{bmatrix}
    P_{\mathcal{S}_{i,1}}x_{\mathcal{S}_{i,1}}-\rho A_{\mathcal{S}_{i,1}}^T\Big(\sum_{\overset{j =1}{j\neq 1}}^pA_{\mathcal{S}_{i,j}}x_{\mathcal{S}_{i,j}}\Big) \\
    \vdots\\
    P_{\mathcal{S}_{i,p}}x_{\mathcal{S}_{i,p}}-\rho A_{\mathcal{S}_{i,p}}^T
    \Big(\sum_{ \overset{j =1}{j\neq p}}^pA_{\mathcal{S}_{i,j}}x_{\mathcal{S}_{i,j}}\Big) \\
  \end{bmatrix}
  \notag \\
  &=& \frac12\sum_{j=1}^{p} \|x_{\mathcal{S}_{i,j}}\|_{P_{\mathcal{S}_{i,j}}}^2 - \frac{\rho}2 \sum_{j=1}^{p}
  \sum_{ \overset{l =1}{l\neq j}}^p \langle  A_{\mathcal{S}_{i,j}}x_{\mathcal{S}_{i,j}}, A_{\mathcal{S}_{i,l}}x_{\mathcal{S}_{i,l}} \rangle
  \notag \\
  &\overset{\eqref{are-equal}}{=}& \frac12\sum_{j\in\Sscr_i} \|x_j\|_{P_{j}}^2 - \frac{\rho}2 \sum_{j\in\Sscr_i}
  \sum_{\overset{l\in\Sscr_i}{l\neq j}} \langle  A_l x_l, A_j x_j  \rangle. \label{Pinnerprod1}
\end{eqnarray}
Following a similar argument, we can write
\begin{align}
  \mathbf{x}_i^T\p_i \mathbf{x}_i\kk
  &= \sum_{j=1}^{p} x_{\mathcal{S}_{i,j}}^T P_{\Sscr_{i,j}} x_{\mathcal{S}_{i,j}}\kk
  - \rho  \sum_{j=1}^{p} \sum_{\overset{l =1}{l\neq j}}^p \langle  A_{\mathcal{S}_{i,j}}x_{\mathcal{S}_{i,j}}, A_{\mathcal{S}_{i,l}}x_{\mathcal{S}_{i,l}}\kk\rangle
  \nonumber \\
   &= \sum_{j\in\Sscr_i} x_j^T P_j x_j\kk
  - \rho  \sum_{j\in\Sscr_i} \sum_{\overset{l \in\Sscr_i}{l\neq j}}
  \langle
  A_l x_l\kk, A_j x_j
  \rangle.
  \label{Pinnerprod2}
\end{align}
We may now use~\eqref{Pinnerprod1} and \eqref{Pinnerprod2} to write
\begin{multline*}
  \frac12\|\mathbf{x}_i - \mathbf{x}_i\kk\|_{\p_i}^2
  = \frac12\sum_{j\in\Sscr_i} \|x_j\|_{P_{j}}^2
  - \sum_{j\in\Sscr_i} x_j^T P_j x_j\kk
  +\frac12\sum_{j\in\Sscr_i} \|x_j\kk\|_{P_{j}}^2 \\
  + \rho  \sum_{j\in\Sscr_i} \sum_{\overset{l \in\Sscr_i}{l\neq j}}
  \langle
  A_l x_l\kk, A_j x_j
  \rangle
   - \frac{\rho}2 \sum_{j\in\Sscr_i}
  \sum_{\overset{l\in\Sscr_i}{l\neq j}} \langle  A_l x_l, A_j x_j  \rangle
   - \frac{\rho}2 \sum_{j\in\Sscr_i}
  \sum_{\overset{l\in\Sscr_i}{l\neq j}} \langle  A_l x_l\kk, A_j x_j\kk  \rangle.
\end{multline*}
This may be equivalently written as
\begin{multline*}
 \frac12\|\mathbf{x}_i - \mathbf{x}_i\kk\|_{\p_i}^2 \\
 =  \frac12 \sum_{j\in\Sscr_i} \| x_j - x_j\kk \|_{P_j}^2
 + \rho  \sum_{j\in\Sscr_i} \sum_{\overset{l \in\Sscr_i}{l\neq j}}
  \langle
  A_l x_l\kk, A_j x_j
  \rangle
   - \frac{\rho}2 \sum_{j\in\Sscr_i}
  \sum_{\overset{l\in\Sscr_i}{l\neq j}} \langle  A_l x_l, A_j x_j  \rangle
   - \frac{\rho}2 \sum_{j\in\Sscr_i}
  \sum_{\overset{l\in\Sscr_i}{l\neq j}} \langle  A_l x_l\kk, A_j x_j\kk  \rangle.
\end{multline*}
By adding this regularization term, i.e., $\frac12\|\mathbf{x}_i - \mathbf{x}_i\kk\|_{\p_i}^2$, to the objective function in \eqref{xitildeupdate}, we obtain (ignoring terms independent of $\mathbf{x}_i$) the F-ADMM update
\begin{multline*}
\mathbf{x}_i^{(k+1)}
= \arg \min_{\mathbf{x}_i}\Big{\{} \mathbf{f}_i(\mathbf{x}_i) - \langle y^{(k)}\!,\mathcal{A}_i\mathbf{x}_i -\mathbf{b}_i\rangle
  - \rho\langle \mathcal{A}_i\mathbf{x}_i,\mathbf{b}_i\rangle \\
  + \frac{\rho}2\sum_{j\in \mathcal{S}_i} \| A_jx_j\|_2^2
  + \rho
  \sum_{j\in \mathcal{S}_i}
  \sum_{ \overset{l \in \mathcal{S}_i}{l\neq j}} \langle A_l x_l\kk, A_j x_j\rangle
   + \frac12 \sum_{j\in\Sscr_i} \| x_j - x_j\kk \|_{P_j}^2
   \Big\},
\end{multline*}
which is equivalent (again ignoring constant terms) to
\begin{align}
\notag
\mathbf{x}_i^{(k+1)}
&= \arg \min_{\mathbf{x}_i}
 \!\sum_{j\in\Sscr_i}\!
 \Big\{f_j(x_j)
  - \langle y^{(k)}\!, A_j x_j \rangle
  - \rho \langle A_j x_j,\mathbf{b}_i\rangle
  + \frac{\rho}2 \| A_jx_j\|_2^2
  + \rho  \sum_{ \overset{l \in \mathcal{S}_i}{l\neq j}} \langle A_l x_l\kk, A_j x_j\rangle
   + \frac12 \| x_j - x_j\kk \|_{P_j}^2 \!\Big\}\\
   &= \arg \min_{\mathbf{x}_i}
 \!\sum_{j\in\Sscr_i}\!
 \Big\{f_j(x_j) + \frac{\rho}2\|A_j x_j + \sum_{ \overset{l \in \mathcal{S}_i}{l\neq j}} A_l x_l\kk- \mathbf{b}_i - \frac{y\kk}{\rho}\|_2^2
   + \frac12 \| x_j - x_j\kk \|_{P_j}^2 \!\Big\}.
\end{align}
The regularization matrix $\p_i$, defined in \eqref{Pi}, has caused the cross-product term to be eliminated from \eqref{xitildeupdate} (recall that~\eqref{xitildeupdate} was the update \emph{without} using the regularization term), and subsequently the subproblem for updating $\mathbf{x}_i^{(k+1)}$ is separable into $p$ blocks (one solve for each $j\in\Sscr_i$). That is, the decision variables $x_j$ for $j \in \mathcal{S}_i$ can be solved for \emph{in parallel}.
This updating strategy forms our hybrid algorithm H-ADMM, which is able to use a combination of \emph{both} Jacobi and Gauss-Seidel updates. We emphasize that H-ADMM is a special case of Algorithm~\ref{GSADMM}, where the blocks of variables have been (implicitly) grouped together as in \eqref{Problem_lblockform}, and the regularization matrices have the form \eqref{Pi}.

\subsubsection{The H-ADMM Algorithm}

The following is a formal statement of our H-ADMM algorithm. Recall
that H-ADMM is a special case of F-ADMM, and convergence of H-ADMM
follows directly from the convergence theory for F-ADMM.
\begin{algorithm}[H]
\caption{H-ADMM for solving problem~\eqref{Problem}. }\label{HADMM}
  \begin{algorithmic}[1]
    \State \textbf{Initialize:} $x^{(0)} \in \R^N$, $y^{(0)} \in \R^m$, iteration counter $k=0$,
    	       parameters $\rho>0$ and $\gamma \in(0,2)$, data partition index sets $\{\mathcal{S}_i\}_{i = 1}^\ell$,
	       and regularization matrices $\{P_i\}_{i = 1}^n$ satisfying Assumption~\ref{Assume_Pj}.
    \While {stopping condition has not been met}
    \For{$i = 1,\dots,\ell$ in a Gauss-Seidel fashion solve} \label{step-outerfor}
    \State Set $\mathbf{b}_i \gets b - \sum_{q=1}^{i-1}\mathcal{A}_q\mathbf{x}_q\kpo - \sum_{s=i+1}^{\ell}\mathcal{A}_s\mathbf{x}_s\kk$.\label{step-bup}
        \For{$j \in \mathcal{S}_i$ (in parallel)}\label{step-innerfor}
        \begin{eqnarray}\label{HGADMM-parallelloop}
          x_j\kpo \!\!\!&\gets& \!\!\!\arg \min_{x_j}
 \sum_{j\in\Sscr_i}
 \Big\{f_j(x_j) + \frac{\rho}2\|A_j x_j + \sum_{ \overset{l \in \mathcal{S}_i}{l\neq j}} A_l x_l\kk- \mathbf{b}_i - \frac{y\kk}{\rho}\|_2^2
   + \frac12 \| x_j - x_j\kk \|_{P_j}^2 \Big\}
        \end{eqnarray}
        \EndFor
    \EndFor
\State Update the dual variables:
\begin{equation} \label{eq:yup-bd}
     y^{(k+1)} \leftarrow y^{(k)} - \gamma\rho(Ax^{(k+1)}-b ).
\end{equation}
\State Set $k\gets k+1$.
\EndWhile
  \end{algorithmic}
\end{algorithm}

The groups of data are updated in a Gauss-Seidel scheme (see the \textbf{for} loop in Step~\ref{step-outerfor}), while the individual blocks within each group are updated in a Jacobi (parallel) scheme (see the inner \textbf{for} loop in Step~\ref{step-innerfor}).

We have presented H-ADMM as a (serial) Gauss-Seidel algorithm that has
an inner loop which can be executed in parallel, i.e., H-ADMM is
partially parallel. However, we can also view H-ADMM
as a fully parallel method that occasionally inserts updated
information during the update from $\mathbf{x}\kk$ to $\mathbf{x}\kpo$. This shows that
H-ADMM is extremely flexible.

\begin{remark}
  Notice that the regularization matrix \eqref{Pi} is not explicitly formed in H-ADMM (Algorithm~\ref{HADMM}). Specifically, H-ADMM only uses the matrices $P_{\mathcal{S}_{i,1}},\dots,P_{\mathcal{S}_{i,p}}$, which lie on the main (block) diagonal of $\p_i$. Therefore, as for F-ADMM, only one $N_i \times N_i$ regularization matrix $P_i$ is required by H-ADMM for each of the $i = 1,\dots,n$ (individual) blocks.
\end{remark}

\begin{remark} \label{remark-equiv}
  The update \eqref{HGADMM-parallelloop} in H-ADMM has the same form
  as the update \eqref{JADMMupdate} in J-ADMM (Algorithm~\ref{JADMM})
  for all $j \in \mathcal{S}_i$. Therefore, a single iteration of
  H-ADMM has essentially the same cost as that of J-ADMM. However,
  H-ADMM has the advantage of using the most recent updates when
  updating the groups of data.
\end{remark}

\subsection{Computational Considerations}\label{S_practical}

Parallel algorithms are imperative on modern computer architectures, which is why, at face value, Jacobi-type methods seem to have significant advantages over Gauss-Seidel-type competitors. The H-ADMM (Algorithm \ref{HADMM}) bridges the gap between purely Jacobi or purely Gauss-Seidel updates, finding a balance between ensuring algorithm speed via parallelization and allowing up-to-date information to be fed back into the algorithm. In this section we describe how to choose the number of groups $\ell$ and group size $p$ to ``optimize" H-ADMM from a computational perspective. Moreover, we show that H-ADMM is competitive compared with J-ADMM.

Consider a big data application where the number of blocks $n$ is very large. Moreover, suppose we have access to a parallel machine with $p$ processors, where $p < n$ (or even $p \ll n$). Again we will assume that $n = \ell p$, and the $n$ blocks are organized  into $\ell$ groups of $p$ blocks. We stress that \emph{the number of blocks in each group is the same as the number of processors}.

To implement H-ADMM we first initialize $\mathbf{b}_1$. Then, take the
first group of $p$ blocks and send one block to each of the $p$
processors. These $p$ blocks are updated in parallel as in
\eqref{HGADMM-parallelloop} (Step 5 of Algorithm~\ref{HADMM}). Once
these $p$ blocks have been updated, we have the updated group variable
$\mathbf{x}_1\kpo$ consisting of individual blocks
$x_1\kpo,\dots,x_p\kpo$. We then form $\mathbf{b}_2$ as in Step 4 of
Algorithm~\ref{HADMM}. Notice that $\mathbf{b}_2$ incorporates the new
information from the updated block $\mathbf{x}_1\kpo$ via the term
$\mathcal{A}_1\mathbf{x}_1\kpo$, i.e., we feed the updated information back into the algorithm. Now, the next group of $p$ blocks are sent to the $p$ processors to be updated, giving $\mathbf{x}_2\kpo$ consisting of individual blocks $x_{p+1}\kpo,\dots,x_{2p}\kpo$. This new information is then fed back into H-ADMM via the vector $\mathbf{b}_3$. The process is repeated until a full sweep of the data has been completed, i.e., all $n$ blocks have been updated.

In this way, our H-ADMM algorithm has (essentially) the same computational cost as J-ADMM, because the data blocks have been grouped in an intelligent way that takes advantage of the processors available. (For J-ADMM, the data blocks also need to be sent to processors in groups of $p$, it is just that, for J-ADMM, there is no need to update the vector $\mathbf{b}_i$ between the $\ell$ sweeps of the processors.) We note that for J-ADMM, the matrix-vector multiplication $Ax\kpo$ is computed once \emph{all $n$ blocks of $x$ have been updated} (i.e., once $x\kpo$ is available), whereas for H-ADMM, the computation of $Ax\kpo$ has been split and performed in stages with the vectors $\mathcal{A}_i\mathbf{x}_i\kpo$ (for $i = 1,\dots,\ell$) computed after \emph{each group of data has been updated} and the sum taken just before the dual variables are updated. Again, this shows that H-ADMM and J-ADMM have approximately the same computational cost, but H-ADMM has the advantage of new information becoming available to the algorithm, which has the potential for H-ADMM to be more efficient.

\begin{remark}
  Notice that if $n \leq p$, then H-ADMM  is essentially equivalent to J-ADMM (applied to strongly convex functions) if we take $\ell = 1$ and replace Step~\ref{step-bup} with $\mathbf{b}_1 \equiv b$.
\end{remark}

\subsubsection{An efficient implementation of Steps 4--6 in Algorithm \ref{HADMM}}

Algorithm \ref{HADMM} was written to match our presentation in the
text.  However, in practice,
it is computationally advantageous to perform Steps 4--6 in a
different, but equivalent way. To
that end, consider the middle term in the minimization subproblem
\eqref{HGADMM-parallelloop}, and using the definition of
$\mathbf{b}_i$ (Step~4 in Algorithm \ref{HADMM}) we have
\begin{align}\label{Eq_efficientbi}
\notag
  A_j x_j + \sum_{ \overset{l \in \mathcal{S}_i}{l\neq j}} A_l x_l\kk- \mathbf{b}_i - \frac{y\kk}{\rho}
  &= A_j x_j + \sum_{ \overset{l \in \mathcal{S}_i}{l\neq j}} A_l x_l\kk + \sum_{q=1}^{i-1}\mathcal{A}_q\mathbf{x}_q\kpo + \sum_{s=i+1}^{\ell}\mathcal{A}_s\mathbf{x}_s\kk- b - \frac{y\kk}{\rho}\\
  \notag
  &= A_j x_j - A_j x_j\kk+ \sum_{l \in \mathcal{S}_i} A_l x_l\kk + \sum_{q=1}^{i-1}\mathcal{A}_q\mathbf{x}_q\kpo + \sum_{s=i+1}^{\ell}\mathcal{A}_s\mathbf{x}_s\kk- b - \frac{y\kk}{\rho}\\
  \notag
  &= A_j x_j - A_j x_j\kk + \mathcal{A}_i\mathbf{x}_i\kk + \sum_{q=1}^{i-1}\mathcal{A}_q\mathbf{x}_q\kpo + \sum_{s=i+1}^{\ell}\mathcal{A}_s\mathbf{x}_s\kk- b - \frac{y\kk}{\rho}\\
  &= A_j x_j - A_j x_j\kk + \sum_{q=1}^{i-1}\mathcal{A}_q\mathbf{x}_q\kpo + \sum_{s=i}^{\ell}\mathcal{A}_s\mathbf{x}_s\kk- b - \frac{y\kk}{\rho}.
\end{align}
Notice that the last 4 terms in \eqref{Eq_efficientbi} are fixed with
respect to $j \in \mathcal{S}_i$, so we can combine them into a single
vector $v_i$ say, and rewrite Steps 4--6 in Algorithm~\ref{HADMM} as follows.

\begin{algorithm}[H]
\caption{An efficient implementation to replace Steps 4--6 in H-ADMM.}\label{HADMM_efficient}
  \begin{algorithmic}[1]
    \State Set $v_i \gets \sum_{q=1}^{i-1}\mathcal{A}_q\mathbf{x}_q\kpo + \sum_{s=i}^{\ell}\mathcal{A}_s\mathbf{x}_s\kk- b - \frac{y\kk}{\rho}$.\label{step-bup-2}
        \For{$j \in \mathcal{S}_i$ (in parallel)} 
        \begin{eqnarray}\label{HGADMM-parallelloop2}
          x_j\kpo &\gets& \arg \min_{x_j}
 \sum_{j\in\Sscr_i}
 \Big\{f_j(x_j) + \frac{\rho}{2}\|A_j (x_j-x_j\kk) + v_i\|_2^2
   + \frac12 \| x_j - x_j\kk \|_{P_j}^2 \Big\}.
        \end{eqnarray}
        \EndFor
  \end{algorithmic}
\end{algorithm}

\subsubsection{Practical considerations regarding Assumption \ref{Assume_Pj}}

As discussed in Section \ref{SSS_Pi}, choosing the regularization
matrices to have the form \eqref{Pi} for $i=1,\dots,\ell$ in a manner
that satisfies Assumption \ref{Assume_Pj}, ensures that H-ADMM is globally
convergent. However, we have remarked that
an implementation of H-ADMM
only needs the individual (diagonal) block matrices $\{P_j\}_{j=1}^n$.
The purpose of this section is to translate Assumption
\ref{Assume_Pj}, which is an assumption on the group regularization
matrices $\{\p_i\}_{i=1}^\ell$, into a \emph{practical condition} on the
matrices $\{P_j\}_{j=1}^n$.

To this end, recall the definition of $\p_i$ in \eqref{Pi}.
If we define
\begin{equation}
  \p_i^D \eqdef \begin{bmatrix}
    P_{\mathcal{S}_{i,1}} + \rho A_{\mathcal{S}_{i,1}}^TA_{\mathcal{S}_{i,1}} & & \\
    & \ddots & \\
    & & P_{\mathcal{S}_{i,p}} + \rho A_{\mathcal{S}_{i,p}}^TA_{\mathcal{S}_{i,p}}
  \end{bmatrix}, \qquad
\end{equation}
then Assumption~\ref{Assume_Pj} can be written equivalently as
  $\p_i
  \equiv \p_i^D - \rho \mathcal{A}_i^T\mathcal{A}_i
  \succ \frac{\rho^2}{2 \mu} \|\mathcal{A}_D^T\mathcal{A}_\bigtriangleup\|_2^2 I$,
  which holds if and only if
  $\p_i^D
  \succ \rho \mathcal{A}_i^T\mathcal{A}_i
  + \frac{\rho^2}{2 \mu} \|\mathcal{A}_D^T\mathcal{A}_\bigtriangleup\|_2^2 I$.
Using $\rho\|\mathcal{A}_i\|_2^2 I \succeq \rho\mathcal{A}_i^T
\mathcal{A}_i$, a sufficient condition for Assumption~\ref{Assume_Pj} to
hold is that
\begin{equation} \label{suff-cond}
  \p_i^D
  \succ \rho \|\mathcal{A}_i\|_2^2 I
  + \frac{\rho^2}{2 \mu} \|\mathcal{A}_D^T\mathcal{A}_\bigtriangleup\|_2^2 I.
\end{equation}
It then follows from the definition of $\p_i^D$ that~\eqref{suff-cond}
will  hold (equivalently, Assumption~\ref{Assume_Pj} will be
satisfied) if the matrices
$\{P_j\}_{j\in\Sscr_i}$ 
are chosen to satisfy
\begin{equation}\label{PracticalPicondition}
  P_j + \rho A_j^TA_j
  \succ
  \rho \|\mathcal{A}_i\|_2^2 I
  + \frac{\rho^2}{2\mu}\|\mathcal{A}_D^T\mathcal{A}_\bigtriangleup\|_2^2 I
   \qquad \forall \; j\in \mathcal{S}_i.
\end{equation}
That is, if \eqref{PracticalPicondition} is satisfied for all $1 \leq
i \leq \ell$, then H-ADMM is globally convergent.

\subsection{A Special Case for Convex Functions}\label{S_HADMMl2}
In this section, we describe how our hybrid algorithm
is appropriate for convex functions (i.e.,
we do not need strong convexity) in the case of $2$ groups.  In
particular, it is based on Algorithm~\ref{GADMM}, which was first
introduced in~\cite{Deng12} and shown to be globally convergent if the
regularization matrices $P_1$ and $P_2$ in \eqref{GADMM_primal} are
chosen appropriately. In particular, the convergence theory introduced
in \cite{Deng12} holds when $P_1\succ 0$ and $P_2 \succ 0$.

During the remainder of this section, we demonstrate that by choosing the regularization matrix appropriately, Algorithm \ref{GADMM} can be extended to handle the $n$ block case while maintaining all existing convergence theory. This is done by following the hybridization scheme introduced previously in this section.

So, suppose that we have an optimization problem of the form \eqref{Problem}, and that we partition the $n$ blocks into 2 groups, i.e., we have $\ell = 2$ groups  and, for simplicity,
assume that $p = n/2$.\footnote{
From a practical perspective it is sensible to let the first group have a cardinality that is a multiple of the number of processors, and then the second group would contain the remaining blocks.} We can then equivalently write our problem in the form \eqref{Problem_lblockform} where
\begin{subequations}\label{GroupedConvex}
  \begin{eqnarray}
  \mathbf{x}_1 = \begin{bmatrix}
    x_1\\ \vdots\\ x_{n/2}
  \end{bmatrix},\quad
  \mathbf{x}_2 = \begin{bmatrix}
    x_{n/2+1}\\ \vdots\\ x_{n}
  \end{bmatrix},
\end{eqnarray}
\begin{eqnarray}
  \mathbf{f}_1(\mathbf{x}_1) = \sum_{i=1}^{n/2} f_i(x_i), \quad
  \mathbf{f}_2(\mathbf{x}_2) = \!\!\!\sum_{i=n/2+1}^{n} \!\!f_i(x_i),
\end{eqnarray}
and
\begin{eqnarray}
  \mathcal{A}_1 = \begin{bmatrix}
    A_1 & \dots & A_{n/2}
  \end{bmatrix}
  \ \ \text{and} \ \
   \mathcal{A}_2 = \begin{bmatrix}
    A_{n/2+1} & \dots & A_{n}
  \end{bmatrix}.
\end{eqnarray}
\end{subequations}
It is clear that we can apply Algorithm \ref{GADMM} to the grouped data \eqref{GroupedConvex}.
If we now choose the regularization matrices $\p_1$ and $\p_2$ to have the same form as in~\eqref{Pi}, we have
\begin{equation}\label{GADMM_P1}
  \p_1 =
  \begin{bmatrix}
    P_1 & -\rho A_1^TA_2 & \dots & -\rho A_1^TA_{n/2}\\
    -\rho A_2^TA_1 & P_2 & & \vdots\\
    \vdots & & \ddots & \vdots\\
    -\rho A_{n/2}^TA_1 & \dots& \dots& P_{n/2}
  \end{bmatrix}
  \end{equation}
  and
  \begin{equation}\label{GADMM_P2}
  \p_2 =
  \begin{bmatrix}
    P_{n/2+1} & -\rho A_{n/2+1}^TA_{n/2+2} & \dots & -\rho A_{n/2+1}^TA_{n}\\
    -\rho A_{n/2+2}^TA_{n/2+1} & P_{n/2+2} & & \vdots\\
    \vdots & & \ddots & \vdots\\
    -\rho A_{n}^TA_{n/2+1} & \dots& \dots& P_{n}
  \end{bmatrix}.
\end{equation}
Moreover, by letting
\begin{equation*}
  \p_1^D =
  \begin{bmatrix}
    P_1 + \rho A_1^TA_1&   & \\
     &  \ddots & \\
     & & P_{n/2} + \rho A_{n/2}^TA_{n/2}
  \end{bmatrix}
  \ \ \text{and} \ \
  \p_2^D =
  \begin{bmatrix}
    P_{n/2+1} + \rho A_{n/2+1}^TA_{n/2+1}&   & \\
     &  \ddots & \\
     & & P_{n} + \rho A_{n}^TA_{n}
  \end{bmatrix}
\end{equation*}
we have that $\p_1 = \p_1^D - \rho\mathcal{A}_1^T\mathcal{A}_1$ and $\p_2 = \p_2^D - \rho\mathcal{A}_2^T\mathcal{A}_2$. Thus, a sufficient condition for the matrices $\p_1$ and $\p_2$ to be positive definite is that
\begin{eqnarray}\label{GADMMn_condition}
  \p_1^D \succ \rho\|\mathcal{A}_1\|_2^2 I \quad \text{and}\quad
  \p_2^D \succ \rho\|\mathcal{A}_2\|_2^2 I.
\end{eqnarray}
We can then see that a sufficient condition for~\eqref{GADMMn_condition} to hold is to choose  the matrices $\{P_j\}_{j=1}^n$  to satisfy
\begin{equation} \label{suff-2groups}
P_j \succ \rho\|\Ascr_1\|_2^2 I \  \text{for $1 \leq j \leq n/2$,}
\ \ \text{and} \ \
P_j \succ \rho\|\Ascr_2\|_2^2 I \ \ \text{for $n/2+1 \leq j \leq n$.}
\end{equation}
In summary, if the matrices $\{P_j\}_{j=1}^n$ are chosen to
satisfy~\eqref{suff-2groups}, then Algorithm~\ref{GADMMn} is
guaranteed to converge for \emph{convex functions}. We also comment
that $\p_1$ and $\p_2$ in \eqref{GADMM_P1} and \eqref{GADMM_P2} need
not be formed explicitly, since Algorithm~\ref{GADMMn} only requires
the block diagonal regularization matrices $\{P_j\}_{j=1}^n$ be chosen to satisfy~\eqref{suff-2groups}.
\begin{algorithm}[H]
\caption{H-ADMM($\ell=2$) for solving problem~\eqref{Problem} with a convex objective.}\label{GADMMn}
  \begin{algorithmic}[1]
    \State \textbf{Initialization:} $x^{(0)}\in\R^N$, $y^{(0)} \in \R^m$, iteration counter $k=0$, parameters $\rho>0$ and $\gamma\in (0,2)$, and regularization matrices $\{P_j\}_{j=1}^n$.
    \While {the stopping condition has not been met}
    \State Set $v_1 = \sum_{i=1}^2 \Ascr_i \mathbf{x}_i\kk - b - \frac{y\kk}{\rho}$.
\begin{subequations}
\For{$j \in \{1,\dots,n/2\}$ in parallel}
\begin{eqnarray}\label{GADMM_primal1}
     x_j\kpo &\gets& \min_{x_j}\Big{\{}f_j(x_j) + \frac{\rho}2 \| A_j(x_j - x_j\kk) + v_1\|_2^2 + \frac12\|x_j - x_j\kk\|_{P_j}^2 \Big{\}}
\end{eqnarray}
\EndFor
\State Set $v_2 = \Ascr_1 \mathbf{x}_1\kpo  + \Ascr_2 \mathbf{x}_2\kk - b - \frac{y\kk}{\rho}$.
\For{$j \in\{n/2+1,\dots,n\}$ in parallel}
\begin{eqnarray}\label{GADMM_primal2}
     x_j\kpo &\gets& \min_{x_j}\Big{\{}f_j(x_j) + \frac{\rho}2 \| A_j(x_j - x_j\kk) + v_2\|_2^2 + \frac12\|x_j - x_j\kk\|_{P_j}^2 \Big{\}}
\end{eqnarray}
\EndFor
\State Update the dual variables:
\begin{equation*}
     y^{(k+1)} \leftarrow y^{(k)} - \gamma\rho(Ax^{(k+1)}-b).
\end{equation*}
\end{subequations}
\State Set $k\gets k+1$.
\EndWhile
  \end{algorithmic}
\end{algorithm}

\begin{remark}
  An algorithm similar to Algorithm~\ref{GADMMn} is presented in
  \cite{He14}. However, Algorithm~\ref{GADMMn} is more general because
  the only restriction on the matrices $\{P_j\}_{j=1}^n$, are that
  they are ``positive definite enough'', i.e., they satisfy~\eqref{suff-2groups}. On the other hand, the algorithm in \cite{He14} requires the regularization matrices to take the specific form $c_i\rho A_i^TA_i$, where $A_i$ has full rank and $c_i > n/2$ for all $i=1,\dots,n$ (assuming that the individual blocks are partitioned evenly into 2 groups). These latter conditions are more restrictive, and also do not necessarily mean that the subproblems arising within their algorithm are easier to solve. For the example of $l_1$-minimization subject to equality constraints, the regularization matrices $\{P_j\}_{j=1}^n$ in Algorithm \ref{GADMMn}, can be chosen to have the form $P_i = \tau_j I - \rho A_i^TA_i$ for some $\tau_j$, which means that subproblems \eqref{GADMM_primal1} and \eqref{GADMM_primal2} can be solved using soft-thresholding. This is not possible for the algorithm presented in \cite{He14}. For further details, see the numerical experiments in Section \ref{Section_experiments_l1}.
\end{remark}

\section{Numerical Experiments}\label{S_numericalresults}

In this section we present numerical experiments to demonstrate the
computational performance of F-ADMM (Algorithm \ref{GSADMM}) and
H-ADMM (Algorithm \ref{HADMM}), and compare them with J-ADMM
\cite{Deng14}. All numerical experiments were conducted using {\sc Matlab}
on a PC with an Intel i5-3317U, 1.70GHz processor, and 6Gb RAM.

\subsection{$l_2$-Minimization with Linear Constraints}\label{S_l2experiment}

In this numerical experiment, we consider the problem of determining the solution to an underdetermined system of equations with the smallest $2$-norm. Specifically, we aim to solve
\begin{eqnarray}\label{l2experiment}
  \minimize{x\in\R^N} \quad \frac12\|x\|_2^2 \quad \subject \quad  Ax = b.
\end{eqnarray}
We assume that there are $p = 10$ processors, and then divide the data into $\ell = 10$ groups, each group containing $p = 10$ blocks, with each block of size $N_i = 100$, which results in $N = 10^4$ total variables. We also note that the objective function in~\eqref{l2experiment} is (block) separable and can be written as $f(x) = \sum_{i=1}^{n} f_i(x_i)$, with $n = 100$ and $x_i\in N_i$ and $f_i$ is strongly convex with convexity parameter $\mu_i = 1$ for all $1 \leq i \leq 100$.
The constraint matrix $A\in\R^{m\times N}$ with $m = 3\cdot 10^3$ is chosen to be sparse, with approximately 20 nonzeros per row,
where the nonzeros are taken from a Gaussian distribution. To ensure
that $b \in \text{range}(A)$, we randomly generate a vector $z \in
\R^{N}$ with Gaussian entries, and set $b := Az$ so that the constraints in~\eqref{l2experiment} are feasible.


Notice that for problem \eqref{l2experiment}, the subproblem for the $j$th block of $x$ in H-ADMM can be written as
\begin{eqnarray}\label{l1_subprob}
  x_j\kpo &\gets& \arg\min_{x_j}\Big{\{}\frac12\|x_j\|_2^2 + \frac{\rho}2 \| A_j(x_j-x_j\kk) + v_i\|_2^2 + \frac12\|x_j - x_j\kk\|_{P_j}^2 \Big{\}}
\end{eqnarray}
for $j\in\Sscr_i$, where $v_i$ is defined in Step~\ref{step-bup-2} of Algorithm \ref{HADMM_efficient}. (For F-ADMM the subproblems are solved for all $1 \leq j \leq n$.) Notice that the update $x_j\kpo$ can be found by solving the system of equations
\begin{equation}
  (P_j + I + \rho A_j^TA_j)x_j\kpo = (P_j + \rho A_j^TA_j)x_j\kk - \rho A_j^T v_i.
\end{equation}
This linear system motivates us to choose the regularization matrix $P_j$ to be of the form
\begin{equation} \label{def-tauj}
   P_j = \tau_j I - \rho A_j^TA_j
\end{equation}
for some value $\tau_j$, because it may then be combined with~\eqref{l1_subprob} to give the simple and inexpensive update
\begin{equation}\label{l2xupdateP}
  x_j\kpo = \frac{\tau_j}{\tau_j+1} x_j\kk - \frac{\rho}{\tau_j+1} A_j^T v_i.
\end{equation}
For computational reasons, the fraction $\tau_j/(\tau_j+1)$ should be computed \emph{before} multiplication with $x_j\kk$.

For all of the numerical results reported below, we give the number of epochs required by J-ADMM, F-ADMM, and H-ADMM,
and use algorithm parameters $\rho = 0.1$ and $\gamma = 1$. The terminology ``epoch'' refers to one sweep of the data, i.e. that all $n$ blocks of $x$ are updated once. All reported results are averages over one hundred runs. The stopping condition used in all experiments is $\frac12\|Ax-b\|_2^2 \leq 10^{-10}$.

\subsubsection{Results using the values of $\tau$ that satisfy the theory}\label{SSS_l2theory}

In this section, we give the results of our numerical experiments when $\tau_j$ that defines $P_j$ in~\eqref{def-tauj} is chosen as dictated by theory. Specifically, we have
\begin{itemize}
  \item F-ADMM: $\tau_j = \frac{\rho^2}{2\mu} \|A_D^TA_\bigtriangleup\|_2^2 + \rho\|A_j\|_2^2$ for all $1 \leq j \leq n$ (see Assumption \ref{Assume_Pi})
  \item H-ADMM: $\tau_j = \frac{\rho^2}{2\mu} \|\mathcal{A}_D^T \mathcal{A}_\bigtriangleup\|_2^2  + \rho \|\mathcal{A}_i\|_2^2$ for all  $j\in \mathcal{S}_i$ and $1 \leq i \leq \ell$ (see Assumption~\ref{Assume_Pj}  and~\eqref{PracticalPicondition})
  \item J-ADMM: $\tau_j = \rho(n-1)\|A_j\|_2^2$ for all $1 \leq j \leq n$ (see~\cite{Deng14})
\end{itemize}
for the three methods.
%
To get a sense of the size of these choices for $\tau_j$,  we plot their magnitudes in Figure~\ref{Figure_tauvalues}.  The x-axis represents the block number and the y-axis the value of $\tau_j$.  For example, a blue point at the value $(20,180)$ means that $\tau_{20} = 180$.
We can clearly see that the $\tau_j$ values are much smaller for H-ADMM and F-ADMM,  than for J-ADMM. Moreover, the $\tau_j$ values used for F-ADMM and H-ADMM are similar in magnitude. This is, perhaps, a disadvantage since a large value for $\tau_j$ translates into stronger regularization in each subproblem~\eqref{l1_subprob}, which in turn translates into smaller steps and potentially slower convergence.  For our test problem~\eqref{l2experiment}, this turns out to be the case, as we now discuss.
\begin{figure}[H]\centering
  \includegraphics[width = 15cm]{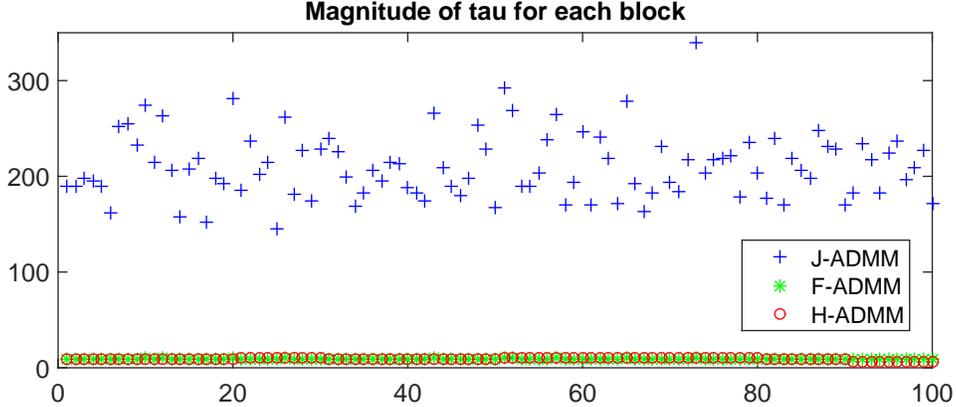}
  \caption{A plot of the magnitude of $\tau_i$ for each block $1 \leq i \leq n$ for problem \eqref{l2experiment}.}
  \label{Figure_tauvalues}
\end{figure}

In Table~\ref{Table_l2theory}, we present the number of epochs needed by each method (averaged over 100 runs).
They show that H-ADMM and F-ADMM require significantly fewer epochs than J-ADMM to determine the solution of problem \eqref{l2experiment} when the theoretical values of $\tau_j$ are chosen.  As discussed in the previous paragraph, we can see that the larger values for $\tau_j$ needed by J-ADMM lead to poor numerical performance compared with F-ADMM and H-ADMM.
However, we remind the reader that H-ADMM and J-ADMM are essentially the same cost per epoch (see Remark~\ref{remark-equiv}), while F-ADMM is generally more costly due to its sequential nature.


\begin{table}[h!]\centering
\begin{tabular}{|c|c|c|}
  \hline
   J-ADMM & H-ADMM & F-ADMM\\
   \hline
       4358.2 & 214.1 & 211.3\\
  \hline
\end{tabular}
\caption{We present the number of epochs required by J-ADMM, F-ADMM, and H-ADMM for the $l_2$-minimization problem \eqref{l2experiment} using theoretical values of $\tau_j$ for $j=1,\dots,n$.}
\label{Table_l2theory}
\end{table}

\subsubsection{Results using values for $\tau$ obtained by parameter tuning}\label{SSS_l2practice}

In~\cite{Deng14}, it was mentioned that J-ADMM displays better
practical performance for smaller values of $\tau_j$ than those
required by the convergence theory. In this section, we compare the
number of epochs required by H-ADMM, F-ADMM, and J-ADMM when $\tau_j$
is allowed to be obtained through parameter tuning. In this
experiment, for simplicity, we assign the same value $\tau_j$ for all
blocks $j=1,\dots,n$. (i.e., $\tau_1 = \tau_2 = \dots = \tau_n$.)
Moreover, we picked the starting value to be $\tau_j =
\frac{\rho^2}{2} \|A\|^4$ because it approximates the values of
$\tau_j$ given by theory, in the sense that:
$\|A_D^TA_{\bigtriangleup}\|^2\leq \|A_D\|^2\|A_{\bigtriangleup} \|^2
\approx \|A\|^2\|A\|^2$.~\footnote{There are many other ways that parameter tuning could be implemented, and we have simply implemented one possibility.}



\newcommand{\strt}{\rule[-.5ex]{0pt}{3ex}}
\newcommand{\hstrt}{\rule[-1ex]{0pt}{3.6ex}}

\begin{table}[h!]\centering
\begin{tabular}{|r|c|c|c|c|c|}
  \hline
  $\tau_j$\phantom{dum} & J-ADMM & H-ADMM & F-ADMM\\
   \hline
  \hstrt$          \frac{\rho^2}{2} \|A\|^4$  & 530.0  & 526.3  & 526.2  \\
  \hstrt$0.6 \cdot \frac{\rho^2}{2} \|A\|^4$  & 324.0  & 320.1  & 319.9  \\
  \hstrt$0.4 \cdot \frac{\rho^2}{2} \|A\|^4$  & 217.7  & 214.5  & 214.1  \\
  \hstrt$0.22\cdot \frac{\rho^2}{2} \|A\|^4$  & 123.1  & 119.3  & 119.0  \\
  \hstrt$0.2 \cdot \frac{\rho^2}{2} \|A\|^4$  & ---    & 95.8   &  95.5  \\
  \hstrt$0.1 \cdot \frac{\rho^2}{2} \|A\|^4$  & ---    & 75.3   &  73.0  \\
  \hline
\end{tabular}
\caption{We present the number of epochs required by J-ADMM, F-ADMM, and H-ADMM for the $\ell_2$-minimization problem~\eqref{l2experiment} for varying values of $\tau_j$. Here, $\tau_j$ takes the same value for all blocks $j=1,\dots,n$.}
\label{l2_practice}
\end{table}
Table~\ref{l2_practice} presents the number of epochs required by J-ADMM, F-ADMM, and H-ADMM on problem \eqref{l2experiment} as $\tau_j$ varies. For each $\tau_j$ we run each algorithm (J-ADMM, F-ADMM and H-ADMM) on 100 random instances of the problem formulation described in Section~\ref{S_l2experiment}.
It is clear that all algorithms require fewer epochs to satisfy the
stopping tolerance as $\tau_j$ decreases. Moreover, for fixed
$\tau_j$, F-ADMM and H-ADMM require slightly fewer epochs than
J-ADMM. Table~\ref{l2_practice} also shows that F-ADMM and H-ADMM will
converge, in practice, for smaller values of $\tau_j$ than J-ADMM. In
particular, J-ADMM diverged when we set $\tau_j = 0.2 \cdot
\frac{\rho^2}{2} \|A\|^4$, whereas F-ADMM and H-ADMM converged for
$\tau_j$ as small as $0.1\cdot \frac{\rho^2}{2} \|A\|^4$; both
diverged for $\tau_j = 0.09\cdot \frac{\rho^2}{2} \|A\|^4$. It is
clear that, when the parameter $\tau_j$ is tuned, F-ADMM and H-ADMM
outperform J-ADMM, when performance is measured in terms of the number
of epochs.

\subsection{$l_1$-Minimization with Linear Constraints}
\label{Section_experiments_l1}
We now consider the problem of $l_1$-minimization subject to equality constraints as given by
\begin{eqnarray}\label{CSexperiment}
  \minimize{x} \quad \|x\|_1 \quad \subject \quad  Ax = b,
\end{eqnarray}
which  arises frequently in the compressed sensing and machine learning literature.
The one norm promotes sparse solutions, while the linear constraints ensure data fidelity. Note that the one norm is separable.

Problem \eqref{CSexperiment} is convex and not strongly convex, which means that H-ADMM($\ell=2$) (Algorithm~\ref{GADMMn}) is guaranteed to converge, while convergence for F-ADMM and H-ADMM has not yet been established.  Nonetheless, we include them in the numerical experiments to study their practical performance.

For ease of comparison, we follow the experiment setup given in \cite{Deng14}. In particular, suppose that the data is partitioned into $n = 100$ blocks of size $N_i = 10$ for all $1 \leq i \leq n$, so that $N = \sum_{i=1}^n N_i = 1000$. We suppose that $A = [A_1,\dots,A_n]$ is randomly generated with Gaussian entries, and that $A_i \in \R^{m \times N_i}$ for each $1 \leq i \leq n$ and $m = 300$, which means that $A \in \R^{m \times N}$.
The sparse signal $x^*$ has $k=60$ randomly located nonzero entries, the nonzero entries are Gaussian, and the vector $b$ is defined by $b := Ax^*$.

For every algorithm we set $\gamma = 1$ and $\rho = 10/\|b\|_1$. 
For H-ADMM we let $n=p\ell$ with $p=4$ and $\ell = 25$, and for H-ADMM($\ell=2$) we set $\ell = 2$ with both groups containing $p=50$ blocks.
All algorithms were terminated when $\|x-x^*\|_2/\|x^*\|_2 \leq10^{-10}$. We report on the number of epochs (as in the previous section) and the final constraint residual $\frac12\|r\|_2^2$, where $r= Ax-b$ for J-ADMM, F-ADMM, H-ADMM, and H-ADMM($\ell=2$). All reported results are averages over 100 runs.

For problem \eqref{CSexperiment},  the subproblem for the $j$th block of $x$ in H-ADMM can be written as
\begin{eqnarray}\label{l1_subprob-2}
  x_j\kpo &\gets& \arg\min_{x_j}\Big{\{}\|x_j\|_1 + \frac{\rho}2 \| A_j(x_j-x_j\kk) + v_i\|_2^2 + \frac12\|x_j - x_j\kk\|_{P_j}^2 \Big{\}},
\end{eqnarray}
for $j\in\Sscr_i$, where $v_i$ is defined in Step~\ref{step-bup-2} of Algorithm \ref{HADMM_efficient}. (For F-ADMM the subproblems are solved for all $1 \leq j \leq n$.)
Next, using a similar choice for $P_j$ as given by~\eqref{def-tauj}, the latter two terms in~\eqref{l1_subprob-2} become
\begin{align*}
  &\phantom{=} \frac{\rho}2 \| A_j(x_j-x_j\kk) + v_i\|_2^2 + \frac12\|x_j - x_j\kk\|_{P_j}^2 \\
  &= \frac{\rho}2  (x_j-x_j\kk)^TA_j^TA_j(x_j-x_j\kk) + \rho(x_j-x_j\kk)^TA_j^T v_i
  + \frac12(x_j-x_j\kk)^TP_j (x_j-x_j\kk) + \frac{\rho}{2}\|v_i\|_2^2 \\
  &= \frac12(x_j-x_j\kk)^T(P_j + \rho A_j^TA_j)(x_j-x_j\kk)+ \rho(x_j-x_j\kk)^TA_j^T v_i + \frac{\rho}{2}\|v_i\|_2^2 \\
  &= \frac{\tau_j}{2}(x_j-x_j\kk)^T (x_j-x_j\kk)+ \rho(x_j-x_j\kk)^TA_j^T v_i + \frac{\rho}{2}\|v_i\|_2^2 \\
  &= \tau_j\Big[ \frac12\|x_j-d_j\|_2^2 - \frac12\| x_j\kk - d_j  \|_2^2 + \frac{\rho}{2\tau_j}\| v_j\|_2^2  \Big],
\end{align*}
where we have defined $d_j := x_j\kk - \frac{\rho}{\tau_j} A_j^T v_i$ to derive the last equality.
Using the previous equality,
the solution to subproblem \eqref{l1_subprob-2} is the same as that given by
\begin{eqnarray}\label{l1_subprob_new}
  x_j\kpo &\gets& \arg\min_{x_j}\Big{\{}\frac{1}{\tau_j}\|x_j\|_1 + \frac12 \|x_j - d_j\|_2^2 \Big{\}},
\end{eqnarray}
which is separable, so that soft thresholding can be used to solve for $x_j\kpo$.

\subsubsection{Results using the values of $\tau$ that satisfy the theory}\label{SSS_CStheory}

Here we present the results of the above stated experiment setup when the values of $\tau_j$ required by the theory are used. We recall that the convergence theory for F-ADMM and H-ADMM has not been established in the convex case, so here we simply use the values of $\tau_j$ that are needed in the strongly convex case. We also recall that convergence of H-ADMM($\ell=2$) \emph{is} guaranteed in the convex case (see Section~\ref{S_HADMMl2}).  Thus, in addition to the $\tau_j$ values for F-ADMM, H-ADMM, and J-ADMM given in Section~\ref{SSS_l2theory}, we also use
\begin{itemize}
   \item H-ADMM($\ell=2$): $\tau_j = \rho \|\mathcal{A}_i\|_2^2$
   for all $j\in \mathcal{S}_i$ and $1 \leq i \leq 2$ (see~\eqref{GADMMn_condition}).
\end{itemize}

Figure \ref{Figure_l1tau} shows typical $\tau_j$ values for each algorithm for the $l_1$-minimization experiment. (For the meaning of each plotted point, see Section~\ref{SSS_l2theory}.) As it was for the $\ell_2$-minimization problem~\eqref{l2experiment}, we again see that the $\tau_j$ values are much smaller for F-ADMM and H-ADMM, when compared to J-ADMM. In addition, the $\tau_j$ values for H-ADMM($\ell=2$) are the smallest overall.
Consequently, the regularization matrices used in H-ADMM($\ell=2$) are significantly less positive definite than all other algorithms, and the regularization matrices for F-ADMM and H-ADMM are less positive definite than for J-ADMM.
\begin{figure}[h!]\centering
  \includegraphics[width = 15cm]{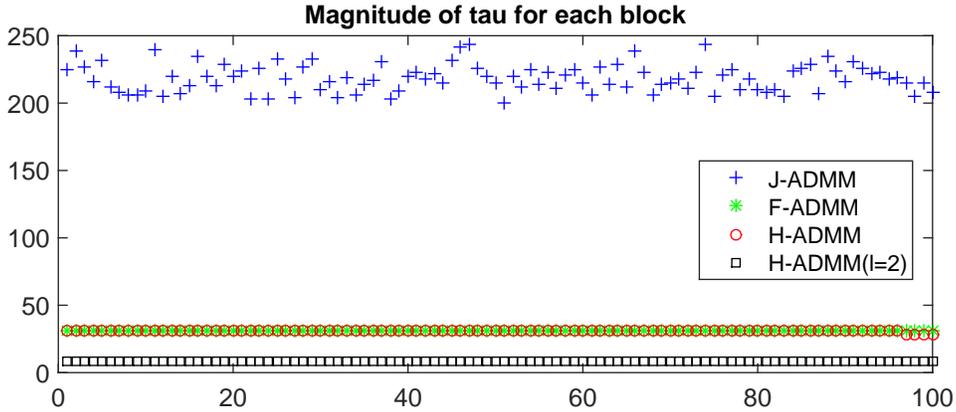}
  \caption{A plot of the magnitude of $\tau_i$ for each block $1 \leq i \leq n$ for problem \eqref{CSexperiment}.}
  \label{Figure_l1tau}
\end{figure}


In Table \ref{CS_theoreticalnoiseless} we give the number of epochs required by each algorithm, as well as the final constraint residual $\frac12\|r\|_2^2$, where $r = Ax-b$. Table \ref{CS_theoreticalnoiseless} shows that H-ADMM($\ell=2$) requires far fewer epochs than the other algorithm, with F-ADMM and H-ADMM requiring about one-sixth the number of epochs compared with J-ADMM, for the $\tau_j$ values stated above. Although there is no convergence theory for F-ADMM and H-ADMM, they both converge in practice for this setup, and are very competitive with J-ADMM.

\begin{table}[h!]\centering
\begin{tabular}{|c|c|c|c|c|c|c|c|c|}
  \hline
  \multicolumn{2}{| c| }{J-ADMM} & \multicolumn{2}{ c| }{H-ADMM($\ell=2$)}& \multicolumn{2}{ c| }{H-ADMM}& \multicolumn{2}{ c| }{F-ADMM} \\
   \hline
   Epochs &  $\frac12\|r\|_2^2$ & Epochs &  $\frac12\|r\|_2^2$ & Epochs &  $\frac12\|r\|_2^2$ & Epochs &  $\frac12\|r\|_2^2$\\
   \hline
     12610.1 & 0.27e-16 & 605.8 & 0.19e-16 & 1882.1 & 0.24e-16 & 1879.4 & 0.24e-16 \\
  \hline
\end{tabular}
\caption{We present the number of epochs required and final constraint violation by J-ADMM, F-ADMM, H-ADMM, and H-ADMM($\ell=2$) for the $l_1$-minimization problem \eqref{CSexperiment} for varying values of $\tau_j$.}
\label{CS_theoreticalnoiseless}
\end{table}

\subsubsection{Results using values for $\tau$ obtained by parameter tuning}

While theory dictates the values of $\tau_j$ needed to guarantee
convergence, experimental performance can often be improved by
selecting better parameter values. In this section, we compare the
performance of the algorithms from the previous section  using smaller
values of $\tau_j$ than those used in Section \ref{SSS_CStheory}. We
repeat the experiments described in 
Section~\ref{Section_experiments_l1}, but now use the same value
$\tau_j$ for all blocks $j=1,\dots,n$ and for all algorithms. The
results are presented in Table~\ref{CS_practicenoiseless}.

Table \ref{CS_practicenoiseless} shows that for the $l_1$-minimization
problem, the number of epochs needed by each of the algorithms to
reach the stopping tolerance decreases as $\tau_j$ decreases. Also,
for each fixed $\tau_j$, J-ADMM requires the most epochs followed by
H-ADMM$(\ell=2)$ and H-ADMM, while F-ADMM requires the smallest number of
epochs. This makes intuitive sense because, during every epoch, F-ADMM
incorporates new information after every block has been updated,
H-ADMM incorporates new information after every group of $p=4$ blocks
have been updated, H-ADMM($\ell=2$) only incorporates new information after half
of the blocks have been updated ($p=n/2 = 50$), while J-ADMM does not
use any updated information within each epoch. 

\begin{table}[H]\centering
\begin{tabular}{|r|c|c|c|c|c|c|c|c|}
  \hline
  & \multicolumn{2}{ c| }{J-ADMM} & \multicolumn{2}{ c| }{H-ADMM($\ell=2$)} & \multicolumn{2}{ c| }{H-ADMM}& \multicolumn{2}{ c| }{F-ADMM}\\
   \hline
   \multicolumn{1}{| c| }{$\tau_j$} & Epochs &  $\frac12\|r\|_2^2$ & Epochs &  $\frac12\|r\|_2^2$ & Epochs &  $\frac12\|r\|_2^2$  & Epochs &  $\frac12\|r\|_2^2$ \\
   \hline
  \hstrt$0.2 \cdot \frac{\rho^2}{2} \|A\|^4$
  & 1078.3 & 0.23e-16 & 1066.2 & 0.23e-16 & 1051.3 & 0.21e-16 & 1053.3 & 0.21e-16\\
  \hstrt$0.1 \cdot \frac{\rho^2}{2} \|A\|^4$
  & 600.3 & 0.19e-16 & 581.5 & 0.19e-16 & 570.0 & 0.20e-16 & 567.8 & 0.22e-16\\
  \hstrt$0.05 \cdot \frac{\rho^2}{2} \|A\|^4$
   & 344.4 & 0.19e-16 & 328.8 & 0.20e-16 & 325.9 & 0.21e-16 & 326.0 & 0.21e-16\\
  \hstrt$0.03 \cdot \frac{\rho^2}{2} \|A\|^4$
   & --- & inf & --- & inf & 246.3 & 0.16e-16 & 244.4 & 0.15e-16\\
  \hstrt$0.02 \cdot \frac{\rho^2}{2} \|A\|^4$
   & --- & inf & --- & inf & 162.2 & 0.34e-16 & 150.3 & 0.29e-16\\
  \hline
\end{tabular}
\caption{We present the number of epochs required and final constraint violation by J-ADMM, F-ADMM, H-ADMM, and H-ADMM($\ell=2$) for the $l_1$-minimization problem~\eqref{CSexperiment} for varying values of $\tau_j$.}
\label{CS_practicenoiseless}
\end{table}


Next, we can also see that J-ADMM and H-ADMM($\ell=2$) perform well until $\tau_j = 0.05 \frac{\rho^2}2 \|A\|_2^4$. However, for smaller values of $\tau_j$, J-ADMM and H-ADMM($\ell=2$) diverged. On the other hand, F-ADMM and H-ADMM still converge (in practice) for $\tau_j = 0.02 \frac{\rho^2}2 \|A\|_2^4$, but diverged when $\tau = 00.19 \frac{\rho^2}2 \|A\|_2^4$. Thus, we can conclude that if the parameter $\tau_j$ is hand-tuned for each algorithm, then practical performance is greatly improved for all methods, and that both F-ADMM and H-ADMM perform the best \emph{in practice} on this convex optimization problem.

\begin{remark}
  An adaptive parameter tuning scheme is presented in
  \cite[Section~2.3]{Deng14}, which ensures that the convergence
  theory developed for J-ADMM still holds, i.e., convergence of J-ADMM
  is guaranteed if their adaptive parameter tuning scheme is
  followed. Unfortunately, we were unable to replicate the numerical
  results presented in that paper, because there was not enough
  information regarding the tuning parameters that they used. However,
  we implemented the adaptive parameter tuning scheme for J-ADMM using
  the following parameters: $\eta = 0.1$, $\alpha_i = 1.1$, $\beta_i =
  0.1$, $Q_i = I$, for all $i=1,\dots,n$, and on average over 100 runs
  on the $l_1$-minimization experiment, J-ADMM required 442.6
  epochs. This is more than the $\approx$ 220 epochs reported in that
  paper. In either case, by hand tuning $\tau_j$, F-ADMM, H-ADMM, and
  H-ADMM($\ell=2$) all outperform J-ADMM.
\end{remark}

\begin{remark}
  Following the same ideas as in \cite[Section~2.3]{Deng14}, it may be
  possible to develop adaptive parameter updating schemes for F-ADMM
  and H-ADMM that still ensure convergence.  In this way, it may be
  possible to achieve additional computational gains for both of them.
\end{remark}

\section{Conclusion}

We presented an algorithm for minimizing block-separable strongly
convex objective functions subject to linear equality constraints.
Our method, called F-ADMM, may be viewed as a flexible version of the
popular ADMM algorithm.  In particular, F-ADMM is provably convergent
for any number of blocks, and contains popular methods such as ADMM,
J-ADMM, and G-ADMM as special cases. 
 
Our work was motived by big data applications.  We showed, via
numerical experiments, that F-ADMM is especially effective when the
number of blocks is larger than the number of available machines.  In
this case, unlike Jacobi methods, our method allows for updated
variables to be used 
when updating the blocks within subsequent groups, all
while maintaining essentially the same cost of a fully Jacobi method.
Our numerical experiments indicate that this approach is more
efficient and stable than the fully
Jacobi method. 


\bibliography{References}

\begin{thebibliography}{10}

\bibitem{Bertsekas08}
D.~P. Bertsekas.
\newblock Extended monotropic programming and duality.
\newblock {\em Extended monotropic programming and duality}, 139(2):209--225,
  2008.

\bibitem{Bertsekas96}
Dimitri Bertsekas.
\newblock {\em Constrained Optimization and Lagrange Multiplier Methods}.
\newblock Athena Scientific, 1996.

\bibitem{Boley13}
Daniel Boley.
\newblock Local linear convergence of the alternating direction method of
  multipliers on quadratic or linear programs.
\newblock {\em SIAM Journal on Optimization}, 23(4):2183–--2207, November 2013.

\bibitem{Boyd10}
S.~Boyd, N.~Parikh, E.~Chu, B.~Peleato, and J.~Eckstein.
\newblock Distributed optimization and statistical learning via the alternating
  direction method of multipliers.
\newblock {\em Foundations and Trends in Machine Learning}, 3(1):1–--122, 2010.

\bibitem{Caia14}
Xingju Caia, Deren Han, and Xiaoming Yuan.
\newblock The direct extension of admm for three-block separable convex
  minimization models is convergent when one function is strongly convex.
\newblock Technical report, School of Mathematical Sciences, Nanjing Normal
  University, and Department of Mathematics, Hong Kong Baptist University,
  Nanjing 210023, P.R. China and Hong Kong, P.R. China, November 2014.

\bibitem{Chen13}
Caihua Chen, Bingsheng He, Yinyu Ye, and Xiaoming Yuan.
\newblock The direct extension of admm for multi-block convex minimization
  problems is not necessarily convergent.
\newblock Technical report, Department of Management Science and Engineering,
  Stanford University, Huang Engineering Center 308, 475 Via Ortega, CA
  94305-4121, October 2013.
\newblock To appear in Mathematical Programming.

\bibitem{Deng14}
Wei Deng, Ming-Jun Lai, Zhimin Peng, and Wotao Yin.
\newblock Parallel multi-block {ADMM} with $o(1/k)$ convergence.
\newblock Technical report, Department of Mathematics, UCLA, Los Angeles, CA
  90095-1555, USA, March 2014.

\bibitem{Deng12}
Wei Deng and Wotao Yin.
\newblock On the global and linear convergence of the generalized alternating
  direction method of multipliers.
\newblock Report 12--14, Department of Computational and Applied Mathematics,
  Rice University, Houston, TX 77005-1892, 2012.

\bibitem{Eckstein12}
Jonathan Eckstein.
\newblock Augmented {L}agrangian and alternating direction methods for convex
  optimization: A tutorial and some illustrative computational results.
\newblock Technical Report Report RRR 32-2012, Center for Operations Research,
  Rutgers University, 640 Bartholomew Road, Piscataway, New Jersey, December
  2012.

\bibitem{Eckstein92}
Jonathan Eckstein and Dimitri~P. Bertsekas.
\newblock On the {D}ouglas-{R}achford splitting method and the proximal point
  algorithm for maximal monotone operators.
\newblock {\em Mathematical Programming}, 55:293--318, 1992.

\bibitem{Han12}
Deren Han and Xiaoming Yuan.
\newblock A note on the alternating direction method of multipliers.
\newblock {\em Journal of Optimization Theory and Applications},
  155(1):227--238, 2012.

\bibitem{He14}
Bingsheng He and Xiaoming Yuan.
\newblock Block-wise alternating direction method of multipliers for
  multiple-block convex programming and beyond.
\newblock Technical report, Department of Mathematics, Nanjing University and
  Department of Mathematics, Hong Kong Baptist University, August 2014.

\bibitem{Hong12}
Mingyi Hong and Zhi-Quan Luo.
\newblock On the linear convergence of the alternating direction method of
  multipliers.
\newblock Technical report, Department of Electrical and Computer Engineering,
  University of Minnesota, 200 Union ST SE, Minneapolis, MN, 55455, March 2012.

\bibitem{Hong14}
Mingyi Hong, Zhi-Quan Luo, and Meisam Razaviyayn.
\newblock Convergence analysis of alternating direction method of multipliers
  for a family of nonconvex problems.
\newblock Technical report, Department of Industrial and Manufacturing Systems
  Engineering, Iowa State University; The Chinese University of Hong Kong;
  Department of Electrical Engineering, Stanford University, Ames, IA 50011,
  USA; Shenzhen, China; 350 Serra Mall, Stanford, CA 94305, October 2014.

\bibitem{Mulvey92}
John~M. Mulvey and Andrzej Ruszczy\'{n}ski.
\newblock A diagonal quadratic approximation method for large scale linear
  programs.
\newblock {\em Operations Research Letters}, 12:205--215, 1992.

\bibitem{Rockafellar70}
R.~Tyrrell Rockafellar.
\newblock {\em Convex Analysis}.
\newblock Princeton University Press, 1970.

\bibitem{Rockafellar09}
R.~Tyrrell Rockafellar and Roger J-B. Wets.
\newblock {\em Variational Analysis}.
\newblock Springer-Verlag, 3 edition, 2009.

\bibitem{Ruszczynski95}
Andrzej Ruszczy\'{n}ski.
\newblock On convergence of an augmented {L}agrangian decomposition method for
  sparse convex optimization.
\newblock {\em Mathematics of Operations Research}, 20(3):634--656, 1995.

\bibitem{Tappenden14}
Rachael Tappenden, Peter Richt\'arik, and Burak B\"{u}ke.
\newblock Separable approximations and decomposition methods for the augmented
  {L}agrangian.
\newblock {\em Optimization Methods and Software}, 2014.
\newblock Published online: 06 November 2014.

\bibitem{Wang14}
Huahua Wang, Arindam Banerjee, and Zhi-Quan Luo.
\newblock Parallel direction method of multipliers.
\newblock Technical report, Department of Computer Science, University of
  Minnesota, 200 Union Street SE, Minneapolis, MN 55455, September 2014.

\bibitem{Yang11}
Junfeng Yang and Yin Zhang.
\newblock Alternating direction algorithms for $\ell_1$-problems in compressive
  sensing.
\newblock {\em SIAM Journal on Scientific Computing}, 33(1):250--278, 2011.

\bibitem{Yuan09}
Xiaoming Yuan and Junfeng Yang.
\newblock Sparse and low-rank matrix decomposition via alternating direction
  methods.
\newblock Technical report, Department of Mathematics, Hong Kong Baptist
  University, Hong Kong, China, November 2009.

\bibitem{zavalastochastic}
Victor~M Zavala.
\newblock Stochastic optimal control model for natural gas network operations.
\newblock Technical report, Mathematics and Computer Science Division, Argonne
  National Laboratory, 2013.

\end{thebibliography}
\appendix

\end{document}